\DeclareSymbolFont{SY}{U}{psy}{m}{n}
\DeclareMathSymbol{\emptyset}{\mathord}{SY}{'306}
\theoremstyle{plain}
\newtheorem{thm}{Theorem}[section]
\newtheorem{cor}[thm]{Corollary}
\newtheorem{lem}[thm]{Lemma}
\newtheorem{prop}[thm]{Proposition}
\newtheorem{defn}[thm]{Definition}
\theoremstyle{definition}
\newtheorem{rem}[thm]{Remark}
\newtheorem{3.7}[thm]{}
\newtheorem{7.3}[thm]{}
\numberwithin{equation}{section}
\def\A{{\mathcal A}}
\def\C{{\mathbb C}}
\def\H{{\mathcal H}}
\def\L{\mathcal L}
\def\N{\mathcal{N}}
\def\K{\mathcal K}
\def\B{\mathcal B}
\def\L{\Longrightarrow}
\def\beq{\begin{eqnarray}}
\def\eeq{\end{eqnarray}}
\def\beqa{\begin{eqnarray*}}
\def\eeqa{\end{eqnarray*}}
\def\M{\boldsymbol M}
\def\CCC{\mathbb C}
\def\DDD{\mathbb D}
\newcommand{\U}{\mathcal{U}}
\def\B{\mathcal B}
\def\K{\mathcal K}
\def\U{\mathcal U}
\def\B{\mathcal B}
\def\C{\mathcal C}
\def\H{\mathcal H}
\def\K{\mathcal K}
\def\L{\mathcal L}
\def\M{\mathcal M}
\def\CCC{\mathbb C}
\def\DDD{\mathbb D}
\def\amslatex{$\mathcal{A}\kern-.1667em\lower.5ex\hbox{$M$}\kern-.125em\mathcal{S}$-\LaTeX}
\begin{document}
\title[Cowen-Douglas operators and the third of Halmos' ten problems]{Cowen-Douglas operators and the third of Halmos' ten problems}
\author{Chunlan Jiang,  Junsheng Fang and Kui Ji}
\curraddr{School of Mathematical Sciences, Hebei Normal University, Shijiazhuang, Hebei 050016, China}

\email[C. Jiang]{cljiang@hebtu.edu.cn}
\email[J. Fang]{jfang@hebtu.edu.cn}
\email[K. Ji]{jikui@hebtu.edu.cn, jikuikui@163.com}


\begin{abstract}
Let $T$ be a bounded linear operator on a complex separable infinite dimensional  Hilbert space $\H$.  $T$ is called intransitive if it leaves invariant spaces other than 0 or the whole space $\H$; otherwise it is transitive.
In 1970, P. R. Halmos raised ten open problems on operator theory.  In the past more than 50 years, nine of Halmos' ten problems were answered, but only the third one has made little progress. The third problem of Halmos is  the following: if an intransitive operator has an inverse, is its inverse also intransitive? In this paper, we  establish a set of theoretical systems with the help of Cowen-Douglas operators and spectral analysis. We give an affirmative answer to this problem under certain spectral conditions, which make essential progress in the research of Halmos' third problem.  As the first  application, we show that for an invertible  hyponormal operator $T$, if $T^{-1}$ is intransitive and int$\sigma(T^{-1})^{\land}$ is not connected, then $T$ is also intransitive. As the second application, we show that if $T^{-1}$ has a proper strictly cyclic invariant subspace and there exists a bounded open set $\Omega$ which is a connected component of $\rho(T^{-1})$ such that $\Omega\cap \U_0=\emptyset$, where $\U_0$ is the connected component of $int(\sigma(T^{-1})^\land)$ containing zero point,  then $T$ is intransitive.
\end{abstract}

\maketitle
\section{Introduction}

Let $\mathcal H$ be a complex separable infinite dimensional Hilbert space and $\mathcal{B}({\mathcal H})$ denote the set of bounded linear operators on $\mathcal H$.
An operator $T\in \mathcal{B}({\mathcal H})$ is called intransitive if it leaves invariant spaces other than 0 or the whole space $\H$; otherwise it is transitive.

\textbf{Invariant subspace problem: }Let $T\in \mathcal{B}({\mathcal H})$. Does $T$ have a nontrivial invariant subspace?

It has been more than a century since the invariant subspace problem was raised, and it is still an open question.

A problem closely related to the famous invariant subspace problem is the third problem of Halmos. In 1970, P. R. Halmos raised ten open problems on operator theory. The third problem of Halmos is  the following.

{\bf Problem 3}\,\, {\it If an intransitive operator has an inverse, is its inverse also intransitive?}

The third problem of Halmos can be expressed in the following way: If an invertible operator T has no nontrivial invariant subspaces, does $T^{-1}$ also have no nontrivial invariant subspaces?
Or equivalently, if every nonzero vector $x$ is a cyclic vector of $T$, is  $x$ also a cyclic vector of $T^{-1}$.

A nonzero vector $x$ in $\mathcal{H}$ is called a cyclic vector of $T$ if $span\{T^{k}x\}_{k=0}^\infty=\H$, where $span\{T^{k}x, k\geq0\}$ is the \emph{norm closure} of the linear span of elements in $\{T^{k}x\}_{k=0}^{\infty}$.
In~2007, R. G. Douglas, C. Foias and C. Pearcy showed in \cite{DFP} that
Problem 3 cannot be answered by proving that $T$ and $T^{-1}$ have the same set of cyclic vectors: three kinds of operators are given for which these two sets of cyclic vectors differ. In 2004, the first author discussed Halmos' third problem with R.G.Douglas and came up Lemma~\ref{Theorem 1.1} of this paper, which plays a key role in this paper. The first author thanks R.G.Douglas for many stimulating discussions on Halmos' third problem.

R. G. Douglas, C. Foias and C. Pearcy also pointed out in \cite{DFP}  that the third problem of Halmos is the last unresolved open problem of the ten problems of Halmos. Unfortunately, we only find part of the literature.
In 1972, Problem 1 was solved by P. A. Fillmore, J. G. Stampfli and J. P. Williams in \cite{FSW}.
In 1974, Problem 7 was solved by C. Apostol and  D. Voiculescu  in \cite{Voi}.
In 1976, Problem 8 was solved by D. Voiculescu in \cite{Voi2}.
In 1983, Problem 5 was  resolved by  C.C.Cowen, J.Long and S. Sun and the readers refer to references \cite{CC}, \cite{S1}, \cite{S2}.
In 1997,  Problem 6 was solved by G. Pisier in \cite{Pis}.

For $T\in \B(\H)$, let $\sigma(T)$ denote the spectrum of $T$ and $\rho(T)=\CCC\setminus \sigma(T)$. Let $Lat T$ denote the invariant subspaces lattice of $T$. For a Hilbert space $\K\subseteq \H$, $P_\K$ denote the orthogonal projection from $\H$ onto $\K$.
Let $\sigma(T)^{\land}$ denote the polynomial convex hull of  $\sigma(T)$.  For a compact subset $K$ of $\CCC$, the polynomial convex hull $K^\land$ of $K$
is defined by (~\cite{Sto}, p23)
\[
K^\land\triangleq \left\{z\in\CCC:\, |f(z)|\leq \sup_{w\in K}|f(w)|\,\text{for all polynomials $f$}\right\}.
\]
By~(\cite{Sto}, p24), $\sigma(T)^{\land}$ is the union of $\sigma(T)$ and all of the bounded connected components of $\CCC\setminus \sigma(T)$.

In the past more than 50 years, nine of Halmos' ten problems were answered, but only the third one has made little progress. In this paper, we have established a set of theoretical systems with the help of Cowen-Douglas operators and spectral analysis, which has made essential progress in the research of Halmos' third problem. The specific results are as follows:\\

\textbf{Theorem A. (Theorem~\ref{T:main theorem 1})}
 Suppose $T\in \B(\H)$ is an invertible operator, $x$ is a nonzero noncyclic vector of $T^{-1}$, and $\N(x)=span\{T^{-k}x:\, k\geq 1\}.$ Let $\overline{T}_1(x)=P_{\N(x)} T^{-1} P_{\N(x)}$. If $\sigma(\overline{T}_1(x))^\land\cap \rho_F(\overline{T}_1(x))$ has a connected component which does not contain zero point, then $T$ is intransitive.\\

 \textbf{Theorem B. (Theorem~\ref{T:main theorem 2})}
 Suppose $T\in \B(\H)$ is an invertible operator, $x$ is a nonzero noncyclic vector of $T^{-1}$, and $\N(x)=span\{T^{-k}x:\, k\geq 1\}.$ Let $\overline{T}_2(x)=P_{\N(x)^\perp} T^{-1} P_{\N(x)^\perp}$. If $\sigma(T^{-1})\subseteq \sigma(\overline{T}_2(x))$ and there exists a bounded open set $\Omega$ which is a connected component of $\rho(T^{-1})$ such that $\Omega\cap \U_0=\emptyset$, where $\U_0$ is the connected component of $int(\sigma(T^{-1})^\land)$ containing zero point, then $T$ is intransitive.\\

 As applications of Theorem A and Theorem B, we have the following results.\\

 \textbf{Theorem C. (Theorem~\ref{T:hypernormal})}
 For an invertible  hyponormal operator $T\in\B(\H)$, if $T^{-1}$ is intransitive and int$\sigma(T^{-1})^{\land}$ is not connected, then $T$ is also intransitive.\\

 \textbf{Theorem D. (Theorem~\ref{T:cyclic})}
For an invertible  operator $T\in\B(\H)$, if $T^{-1}$ has a proper strictly cyclic invariant subspace and there exists a bounded open set $\Omega$ which is a connected component of $\rho(T^{-1})$ such that $\Omega\cap \U_0=\emptyset$, where $\U_0$ is the connected component of $int(\sigma(T^{-1})^\land)$ containing zero point,  then $T$ is intransitive.\\

The difficulty in solving the third problem of Halmos is that we only have the information that $T$ is invertible. Therefore, if we are able to answer this problem affirmatively, we need to give the operator $T$ more structure; on the other hand, if the answer to this problem is negative, it means that the answer to the invariant subspace problem is also negative.
In view of this reason, we introduce the Cowen-Douglas operators as a bridge to solve this problem. In this paper, we use  rigid structures of  Cowen-Douglas operators to give  partial answers to this problem.

This paper is divided into seven sections, organized as follows. In Section 2, we introduce some basic knowledge about operator spectrum theory. In Section 3, we introduce some results about Cowen-Douglas operators. In Section 4, we discuss  cyclic vectors. In Section 5, we will study the operator structures of operators $T$ and $T^{-1}$ and build up connections between  $T$, $T^{-1}$ and Cowen-Douglas operators. In Section 6, the proofs of the main theorems are given.
In the last section, we give some applications of the main theorems including the descriptions of invariant subspace of hyponormal operators whose spectrums are not ``thick'' (In~\cite{Brown}, Brown proved that each hypernormal operator with ``thick'' spectrum has a nontrivial invariant subspace) and invertible operators which have proper strictly cyclic invariant subspaces.

We refer to \cite{NKT}, \cite{Brown}, \cite{FJK}, \cite{FJK2}, \cite{Ha1}, \cite{Ha2},  \cite{Her}, \cite{Lom}, \cite{LS2}, \cite{RR2} for more information on Halmos' third problem and the invariant subspace problem. We refer to \cite{Her2}, \cite{JW}, \cite{MU} for operator theory on Hilbert spaces.\\

\noindent{\bf Acknowledgements:} C.Jiang, J.Fang and K.Ji contributed equally to this work. The authors thank Professor Cheng Lixin, Ji Youqing, Wang Kai, Wu jinsong, and Zhang yuanhang for many valuable discussions on the paper.

\section{Preliminaries}
In this paper, $\mathcal H$ is a complex separable infinite dimensional Hilbert space and $\mathcal{B}({\mathcal H})$ is the set of bounded linear operators on $\mathcal H$.
An operator $T\in \mathcal{B}({\mathcal H})$ is called compact if the image of the unit ball of $\mathcal H$ under $T$ is a compact subset of  $\mathcal H$.
Let ${\mathcal K}(\mathcal{H})$ be the ideal of compact operators in $\mathcal{B}({\mathcal H})$, and
$\pi: \mathcal{B}(\mathcal {H})\rightarrow  \mathcal{B}(\mathcal {H})/{\mathcal K}(\mathcal{H})$ (Calkin algebra) be the natural homomorphism projection.  The essential spectrum of $T$, $\sigma_{e}(T)$, is the spectrum of $\pi(T)$ in $ \mathcal{B}(\mathcal {H})/{\mathcal K}(\mathcal{H})$
and $\mathbb{C}\backslash \sigma_{e}(T)$ is called the Fredholm domain of $T$ and is denoted by $\rho_{F}(T)$. Operator $T$ is called semi-Fredholm, if $\mbox{ran}(T)$ (the range of $T$)   is closed, and either dim$\,\ker T$ or dim$\,\ker T^{*}$ is finite. In this case,
$\mbox{ind}\,T=\mbox{dim}\,\ker T-\mbox{dim}\,\ker T^{*}$. A semi-Fredholm operator $T$ is called Fredholm if $\mbox{ind}\,T$ is finite.
By the famous Atkinson theorem, we know $T$ is Fredholm if and only if $\pi(T)$ is invertible in $\mathcal{B}(\mathcal {H})/{\mathcal K}(\mathcal{H})$.

For $T\in \B(\H)$, let $\sigma(T)$ denote the spectrum of $T$ and $\rho(T)=\CCC\setminus \sigma(T)$.  The point spectrum of $T$, $\sigma_{p}(T)$, is defined by $\sigma_{p}(T)=\{\lambda: \mbox{dim}\,\ker (T-\lambda)\geq1\}$. Let $Lat T$ denote the invariant subspaces lattice of $T$.

The definition of strongly irreducible operators was introduced by Gilfeather~\cite{Gil} and Jiang Zejian~\cite{Jia} in 1970's, respectively.

\begin{defn}\label{2.1}
An operator $T\in \mathcal{B}({\mathcal H})$ is called strongly irreducible, if the commutant of $T$, $\mathcal{A}'(T)=\{X\in \mathcal{B}({\mathcal H})|XT=TX\}$, doesn't have any nontrivial idempotent operator. Otherwise, it is called strongly reducible.
\end{defn}
Recall that an operator $P\in \mathcal{B}({\mathcal H})$ is called an idempotent if $P^{2}=P$ and a nontrivial idempotent if $P\neq 0$ or $I$.
\begin{defn}
A nonzero vector $x\in \H$ is called cyclic of $T$ if $span\{T^{k}x,\,k\geq0\}=\H$, where ``span'' denote the norm closure of linear span of the set.
\end{defn}
Let $\mathcal{C}(T)\triangleq\{x|x$ is a cyclic vector of $T\}$.
An operator $T\in \mathcal{B}({\mathcal H})$ is transitive if and only if $\mathcal{C}(T)=\H\backslash\{0\}$.
It is clear that an operator $T\in \mathcal{B}({\mathcal H})$ is strongly reducible, then $T$ is intransitive. When do $T$ and $T^{-1}$ have a common nontrivial invariant subspace? We have the following results.

\begin{prop}\label{1.6}
Let $T\in \mathcal{B}({\mathcal H})$ be an invertible operator. If $T$ is strongly reducible, then so is $T^{-1}$. Furthermore, $T$ and $T^{-1}$ have a common nontrivial invariant subspace.
\end{prop}
\begin{proof}
Suppose that $P\in \mathcal{A}'(T)$ is a nontrivial idempotent operator. Then $P\in \mathcal{A}'(T^{-1})$. Thus $Ran\,P$ and $Ran\,(I-P)$ are both nontrivial invariant subspaces of $T$ and $T^{-1}$.
\end{proof}

\begin{cor}
Let $T\in \mathcal{B}({\mathcal H})$ be an invertible operator. If $\sigma(T)$ is not connected, then $T$ and $T^{-1}$ have a common nontrivial invariant subspace.
\end{cor}
\begin{proof}
Suppose $\sigma(T)$ is not connected.
By the Riesz functional calculus,  there is a nontrivial idempotent $P$ such that $P\in \mathcal{A}'(T)\cap \mathcal{A}'(T^{-1})$.
Therefore,  $RanP\in Lat(T)\cap Lat(T^{-1})$.
\end{proof}

\begin{prop}
Let $T\in \mathcal{B}({\mathcal H})$ be invertible. If $\sigma_{p}(T)\cup\sigma_{p}(T^{*})\neq\emptyset$, then $T$ and $T^{-1}$ have  a common nontrivial invariant subspace.
\end{prop}
\begin{proof}
Without loss of generality, we assume that $T$ is not a scalar operator.
Suppose that $w\in \sigma_p(T)\cup \sigma_p(T^*)$. We have two cases.

(1) If $w\in \sigma_p(T)$, then we can find some $0\neq x \in \mathcal{H}$ such that $(T-w)x=0$. This means
$T^{-1}((T-w)x)=(I-wT^{-1})x=0$. Then $x\in \ker(T^{-1}-w^{-1})$. Thus, $\ker(T-w)\in Lat(T^{-1})\cap Lat(T)$.

(2) If $w\in \sigma_p(T^*)$, then for $x\in \ker(T^{*}-w)$, we have $x\in \ker((T^{*})^{-1}-w^{-1})$. Thus, $$\ker(T^*-w)^{\bot}\in Lat(T^{-1})\cap Lat(T).$$
\end{proof}

Let $\sigma(T)^{\land}$ denote the polynomial convex hull of  $\sigma(T)$. Note that $\sigma(T)^{\land}$ is the union of $\sigma(T)$ and all of the bounded connected components of $\CCC\setminus \sigma(T)$.
\begin{prop}\cite{DFP}
Let $T\in \mathcal{B}({\mathcal H})$ be an invertible operator. If $0\notin \sigma(T)^{\land}$, then $$Lat(T)=Lat(T^{-1})\ \mbox{and}\ \mathcal{C}(T)=\mathcal{C}(T^{-1}).$$
\end{prop}

At the end of this section, we give the following well-known result.

\begin{lem}\label{L:invariant subspace}
Let $T\in \B(\H)$ and $\K$ be an invariant subspace of $T$. Write
\[
T=\begin{pmatrix}
T_1& \quad X\\
0&\quad T_2
\end{pmatrix}
\begin{matrix}
\K\\
\K^\perp
\end{matrix}.
\]
Then $\sigma(T_1)\subseteq\sigma(T)^\land$ and $\sigma(T_2)\subseteq \sigma(T)^\land$.
\end{lem}

\section{Cowen-Douglas operators}
In this section, we introduce basic properties and techniques of Cowen-Douglas operators, which play important role in the proof of our main theorem.
Let $\Omega$ be a bounded open connected subset of the complex plane
$\mathbb{C}$. In \cite{CD}, M. J. Cowen and R. G. Douglas
introduced a class of operators denoted by $B_n(\Omega)$
which contains $\Omega$ as eigenvalues  of
constant multiplicity $n$. The class of Cowen-Douglas operators with
rank $n$, $B_n(\Omega)$ is defined as follows \cite{CD}:
$$\begin{array}{lll}B_n(\Omega):=\{T\in \mathcal{B}(\mathcal{H}):
&(1)\,\,\Omega\subset \sigma(T):=\{w\in \mathbb{C}:T-w ~~
\mbox{is not invertible}\},\\
&(2)\,\,\mbox{span}\{\mbox{ker}(T-w),\,w\in \Omega\}=\mathcal{H},\\
&(3)\,\,\mbox{Ran}(T-w)=\mathcal{H},\\
&(4)\,\,\mbox{dim ker}(T-w)=n, \forall~w\in\Omega.\}
\end{array}$$

We recall the following results in \cite{CD}.
\begin{prop}[1.7.1, \cite{CD}] \label{cd2}Let $T\in B_n(\Omega)$ and $w_0\in \Omega$. Then
$$span\{\mbox{ker}(T-w_0)^{k},\,k\geq1\}=span\{\mbox{ker}(T-w),\,w\in \Omega\}=\mathcal{H}.$$
\end{prop}

\begin{lem}\label{7.1}\cite{CD}
 If $\Omega_{1}\subset\Omega_{2}$, then $B_{1}(\Omega_{2})\subseteq B_{1}(\Omega_{1})$, where $\Omega_{1},\ \Omega_{2}$ are bounded connected open subsets of the complex plane $\mathbb{C}$.
\end{lem}

For Cowen-Douglas operators with index one, we have the following observation.
\begin{lem}\label{3.4}
Let $T\in B_1(\Omega)$. Then  $\sigma_{p}(T^*)=\emptyset$.
\end{lem}
\begin{proof}
Suppose that $e(w)$ is a non-zero vector such that $T(e(w))=we(w)$. By Proposition \ref{cd2}, we know $span\{e(w),\,w\in \Omega\}=\mathcal{H}$. Note that $(T-w_{1})e(w)=(w-w_{1})e(w),\,w_{1}\in \mathbb{C}$.
When $w_{1}\in \Omega$,  we have $Ran(T-w_{1})=\H$ by the definition of $B_1(\Omega)$. If $w_{1}\notin \Omega$, it follows that $w-w_{1}$ is  non-zero.
Thus, $$span\{(T-w_{1})e(w),\,\omega\in \Omega\}=span\{(w-w_{1})e(w),\,\omega\in \Omega\}=span\{e(w),\,\omega\in \Omega\}=\H.$$
This shows that $Ran(T-w_{1})$ is dense in $\H$ and $\ker(T-w_1)^*=\{0\}$. Therefore, $\sigma_{p}(T^*)=\emptyset$.

\end{proof}
\begin{rem}
For any $T\in B_n(\Omega)$, $\sigma_{p}(T^*)=\emptyset,\,n\geq1$.
\end{rem}

\begin{lem}\label{7.2}
For $T\in B_{1}(\Omega)$, there exists a connected open subset $\Phi$ of $\CCC$ such that $\Omega\subseteq\Phi$ and
\begin{enumerate}
\item $B_{1}(\Phi)\subseteq B_{1}(\Omega)$;
\item $\partial\Phi\subset\sigma_{e}(T)$.
\end{enumerate}
In this case, we call $\Phi$ is the maximal domain of $T$.
\end{lem}
\begin{proof}
By Zorn's Lemma, we can find a maximal bounded connected open subset $\Phi$ of $\mathbb{C}$ such that
 $T\in B_{1}(\Phi)$.
Now, we need to prove $\partial\Phi\subset\sigma_{e}(T)$. Otherwise, there exists $\lambda\in\partial\Phi$ such that $\lambda\in\rho_{F}(T)$. That means ind$\,(T-\lambda)=1$ and there exists a neighborhood $O_{\lambda}$ of $\lambda$ such that ind$\,(T-\lambda)=1,\,\,\lambda\in O_{\lambda}$.
By Lemma~\ref{3.4}, $\sigma_{p}((T-\lambda)^{*})=\emptyset$. We have dim$\,\ker(T-\lambda)=1$.
Let $\Phi''=\Phi\cup O_{\lambda}$. Then $\Phi''$ is connected. Thus, we have $T\in B_{1}(\Phi'')$. This contradicts to the maximality of $\Phi$.
\end{proof}

\begin{rem}If $\Omega$ is the maximal domain of Cowen-Douglas operator $T$, then $\partial \Omega\subseteq \sigma_{e}(T).$

\end{rem}

For two operators $A,B\in\B(\H)$, we call $A\sim_s B$ if there exists an invertible operator $X\in \B(\H)$ such that $X^{-1}AX=B$.

\begin{lem}\label{3.3}
Suppose that $A\in \mathcal{B}({\mathcal H})$, $A_{1}\in {\mathcal B}({\mathcal H}\oplus \mathbb{C}e_0)$, and
$$A_{1}=\begin{pmatrix}
0&\quad \alpha_{1}&\quad \alpha_{2}&\quad \alpha_{3}&\quad\cdots\\
0&&&&\\
0&&A&&\\
\vdots&&&&
\end{pmatrix}\begin{matrix}e_{0}&\\
&\\
\H&\\
&\end{matrix}.$$ If $A_{1}$ is surjective and $\mbox{dimker}A_1=1$, then $A\sim_{s}A_{1}$. In particular, if $A_1\in B_1(\Omega)$, $0\in \Omega$ and $A_1e_0=0$, then $
A\sim_{s}A_{1}$.
\end{lem}
\begin{proof}
Since $\mbox{dimker}A_1=1$, we have $\ker A_{1}=\CCC e_{0}$.  Thus, $(\ker A_{1})^{\bot}=\mathcal{H}$.
Let $X=A_{1}|_{(\ker A_{1})^{\bot}}$. Then  $X:\,\mathcal{H}=(\ker A_{1})^{\bot}\rightarrow \mathcal{H}\oplus \mathbb{C}e_0$ is an invertible bounded linear operator.

By the next two equations,
$$\begin{array}{lll}A_{1}X&=&A_{1}\cdot A_{1}|_{(\ker A_{1})^{\bot}}\\
&=&A_{1}(P_{\ker A_{1}}A_{1}|_{(\ker A_{1})^{\bot}}+P_{(\ker A_{1})^{\bot}}A_{1}|_{(\ker A_{1})^{\bot}})\\
&=&A_{1}P_{(\ker A_{1})^{\bot}}A_{1}|_{(\ker A_{1})^{\bot}}\end{array}$$
and
$$\begin{array}{lll}XA&=&A_{1}|_{(\ker A_{1})^{\bot}}\cdot A\\
&=&A_{1}|_{(\ker A_{1})^{\bot}}P_{(\ker A_{1})^{\bot}}A_{1}|_{(\ker A_{1})^{\bot}},
\end{array}$$
we obtain $A_{1}X=XA$, that is, $A\sim_{s}A_{1}$.
\end{proof}

For more on Cowen-Douglas operators, we refer to \cite{CFJ}, \cite{HJ}, \cite{JJ}, \cite{JW}.

\section{Cyclic vectors}

In this section $\Omega$ is a connected open subset of $\CCC$.

\begin{lem}[\cite{Lin}]\label{3.2}
Suppose that $T\in B_n(\Omega)$. Then $\mathcal{C}(T)\neq\emptyset$, where $\mathcal{C}(T)$ is the set of cyclic vectors of $T$.
\end{lem}

\begin{lem}\label{L:cyclic vector}
Suppose that $A\in B_1(\Omega)$, $0\in\Omega$, and $AB=I$. If  $0\neq e_{0}\in \ker A$, then $e_{0}\in \mathcal{C}(B)$. In particular, $\C(B)\neq \emptyset$.
\end{lem}
\begin{proof}
Since $AB=I$, $$\ker A^k=\{e_{0},Be_{0},\cdots, B^{(k-1)}e_{0}\}.$$
Since $A\in B_1(\Omega)$,  $$span\{\ker A^{k},k\geq 0\}=\H.$$  It follows that
$$span\{ B^{k}e_{0},k\geq0\}=\H.$$Thus, we have $e_{0}\in \mathcal{C}(B)$.
\end{proof}

The following result is due to P. A. Fillmore, J. G. Stampfli, J. P. Williams~\cite{FSW}. For the convenience of readers, we give a simple proof.
\begin{prop}\cite{FSW}\label{4.4}
Let $T\in \mathcal{B}(\H)$. If there is a $\lambda\in \mathbb{C}$ such that $dim\,\ker(T-\lambda)^{*}\geq2$, then $\mathcal{C}(T)=\emptyset$.
\end{prop}
\begin{proof}
Without loss of generality, we assume $dim\,\ker(T-\lambda)^{*}=2$.  Suppose that $\{f_{1},f_{2}\}$ is an ONB of $\ker(T-\lambda)^{*}$. Note that $$(\ker(T-\lambda)^{*})^{\bot}=\overline{Ran(T-\lambda)}\in Lat(T).$$
 Let $$T_{1}=T|_{\overline{Ran(T-\lambda)}},\,\,T_{2}=P_{\ker(T-\lambda)^{*}}T|_{\ker(T-\lambda)^{*}},T_{12}=P_{\overline{Ran(T-\lambda)}}T|_{\ker(T-\lambda)^{*}}.$$ We have
$$T_{2}=\begin{pmatrix}
 \,\lambda & \quad 0 \\
 \,0 &\quad \lambda\\
\end{pmatrix},\ T=\begin{pmatrix}
 T_{1} &\quad  T_{12} \\
 0 &\quad  T_{2}\\
\end{pmatrix}\begin{matrix}\overline{Ran(T-\lambda)}&\\
\ker(T-\lambda)^{*}&\end{matrix},\ T^{k}=\begin{pmatrix}
 T_{1}^{k} & \quad * \\
 0 &\quad  T_{2}^{k}\\
\end{pmatrix}\begin{matrix}\overline{Ran(T-\lambda)}&\\
\ker(T-\lambda)^{*}&\end{matrix}.$$
If $\mathcal{C}(T)\neq\emptyset$, then we can find some non-zero vector $y\in \mathcal{C}(T)$.  There exist $\alpha_{1},\,\alpha_{2}\in \mathbb{C}$ such that $$P_{\ker(T-\lambda)^{*}}y=\alpha_{1}f_{1}+\alpha_{2}f_{2}\neq0.$$
Notice that  $span\{T^{k}y,\,k\geq0\}=\H$. On the other hand,
$$\begin{array}{lll}
span\{T_{2}^{k}P_{\ker(T-\lambda)^{*}}y,\,k\geq0\}&=&span\{\lambda^{k}(\alpha_{1}f_{1}+\alpha_{2}f_{2}),\,k\geq0\}\\
&\subseteq&\mathbb{C}\{\alpha_{1}f_{1}+\alpha_{2}f_{2}\}\neq\ker(T-\lambda)^{*}.
\end{array}$$
This contradicts to the fact that $y\in \mathcal{C}(T)$.
\end{proof}

The following proposition is due to D. A. Herrero.
\begin{prop}\cite{Her}\label{4.2}
Let $T\in \mathcal{B}(\H)$. If $\mathcal{C}(T)\neq\emptyset$, then $\rho_{s-F}^{-1}(T)$ is simply connected.
\end{prop}

\begin{prop}\label{4.3}
 Suppose $T\in \mathcal{B}(\H)$, $T^{*}\in B_{1}(\Omega)$,  and $\Omega$ is the maximal domain of $T^{*}$. If $\mathcal{C}(T)\neq\emptyset$, then $\Omega=
  \rho_{s-F}^{-1}(T)$ and $\rho_{s-F}(T)=\rho_{F}(T)$. Furthermore, assume $\Omega'$ is a  bounded connected component  of $\rho_{F}(T^*)=\rho_{s-F}(T^*)$ such that $\Omega'\cap \Omega=\emptyset$. Then $\Omega'\subset \rho(T^*)$.
\end{prop}
\begin{proof}
Clearly, we have $\Omega\subseteq \rho_{s-F}^{-1}(T)$. Since $\mathcal{C}(T)\neq \emptyset$, $\rho_{s-F}^{-1}(T)$ is simply connected by Proposition~\ref{4.2}. Therefore, $\Omega=\rho_{s-F}^{-1}(T)$. Since $\mathcal{C}(T)\neq \emptyset$, $\rho_{s-F}(T)=\rho_{F}(T)$ is a corollary of Lemma~\ref{3.4} and Proposition~\ref{4.4}.

 By Lemma~\ref{3.4}, $\sigma_p(T)=\emptyset$. Since $\Omega^{\prime}$ is a bounded connected component of $\rho_{F}(T)=\rho_{s-F}(T)$ and  $\Omega'\cap \Omega=\emptyset$,   By Proposition \ref{4.4} and Proposition \ref{4.2},  we have that
$$\mbox{dim} \mbox{ker}(T^*-\lambda)<1, \lambda\in \Omega^{\prime}.$$
This means $\mbox{dimker}(T^*-\lambda)=0, \lambda \in \Omega^{\prime}.$ Hence, $\Omega^{\prime}\subset \rho(T^*).$
\end{proof}

\section{Spectral structures of $T$ and $T^{-1}$}
In this section, we first introduce a $2\times 2$ matrix technique to obtain the spectral structures of $T$ and $T^{-1}$.

Let  $\{e_k\}_{k=0}^\infty$ be an orthogonal normal basis (denoted by ``ONB") of a Hilbert space $\H$ and let
$S_1^*$ be the backward shift operator defined as $S_1^*(e_0)=0$, $S_1^*(e_{k+1})=e_k$, $k=0,1,2,\cdots$. For $T\in \B(\H)$ and $x\in \H$, $x\neq 0$, define an operator $T_x\in \B(\H\oplus \H)$ as the following
\[
T_x=\left(\begin{matrix}
T&\quad x\otimes e_0\\
0&\quad S_1^*
\end{matrix}\right).
\]

The following lemma plays a key role throughout this section.
\begin{lem}\label{Theorem 1.1}
Let $T\in\B(\H)$  with spectral radius $r(T)<1$  and let \[T_x=\left(\begin{matrix}
T&\quad x\otimes e_0\\
0&\quad S_1^*
\end{matrix}\right).\] Let $\Sigma$ be the connected component of $\DDD\setminus \sigma(T)$ which contains $\{w\in \DDD: r(T)<|w|<1\}$.  Then we have the following:
\begin{enumerate}
\item For every $w\in \DDD\setminus \sigma(T)$, $dimker(T_x-w)=1$ and $Ran(T_x-w)=\H\oplus \H$;
\item $T_x\in B_1(\Sigma)$ if and only if $x$ is a cyclic vector of $T$, i.e., $span\{T^nx:\,n\geq 0\}=\H$.
\end{enumerate}
\end{lem}
\begin{proof}
(1). Let $w\in \DDD\setminus \sigma(T)$ and $e(w)\triangleq\sum\limits_{k=0}^\infty w^ke_k$. Then $e(w)\in ker(S_1^*-w)$, $w\in \DDD$. We calculate
\[
\left(\begin{matrix}
T-w&\quad x\otimes e_0\\
0&\quad S_1^*-w
\end{matrix}\right)\left(\begin{matrix}
\xi\\
\eta
\end{matrix}\right)=\left(\begin{matrix}
0\\
0
\end{matrix}\right).
\]
This is equivalent to $(S_1^*-w)\eta=0$ and $(T-w)\xi=-\langle \eta,e_0\rangle x$. Since $dimker(S_1^*-w)=1$, there is a $w_1\in\CCC$ such that $\eta=w_1 e(w)$. Without loss of generality, we assume that $w_1=1$.  Note that $\langle e(w), e_0\rangle=1$ and  $T-w$ is invertible for $w\in \DDD\setminus \sigma(T)$. We have $\xi=-(T-w)^{-1}x$. This proves that  $dimker(T_x-w)=1$.

To show that $T_x-w$ is surjective for $w\in \DDD\setminus \sigma(T)$, we need to find for every $\xi'\oplus\eta'\in \H\oplus \H$ a vector $\xi\oplus \eta\in \H\oplus \H$ satisfying the following equation:
\[
\left(\begin{matrix}
T-w&\quad x\otimes e_0\\
0&\quad S_1^*-w
\end{matrix}\right)\left(\begin{matrix}
\xi\\
\eta
\end{matrix}\right)=\left(\begin{matrix}
\xi'\\
\eta'
\end{matrix}\right).
\]

This is equivalent to $(S_1^*-w)\eta=\eta'$ and $(T-w)\xi=\xi'-\langle \eta,e_0\rangle x$. Note that both $S_1^*-w$ and $T-w$ are surjective for $w\in \DDD\setminus \sigma(T)$. The existence of $\xi$ and $\eta$ is clear.

(2). For $w\in \Sigma$, define $y(w)=-(T-w)^{-1}x$. Then by (1), $y(w)\oplus e(w)$ is in $ker(T_x-w)$. To show $T_x\in B_1(\Sigma)$, we  need to prove that
\[
span\{y(w)\oplus e(w), w\in \Sigma\}=\H\oplus \H.
\]

Suppose that there exists an $x_1\oplus x_2\in \H\oplus \H$ such that $\langle y(w)\oplus e(w), x_1\oplus x_2\rangle=0$. Then $\langle y(w),x_1\rangle+ \langle e(w),x_2\rangle=0$.
Since $\langle e(w),x_2\rangle$ is analytic on $\DDD$ and $(w-T)^{-1}=\frac{1}{w}\sum_{n=0}^\infty \left(\frac{T}{w}\right)^n$ for $|w|>r(T)$, $\langle -(T-w)^{-1}x,x_1\rangle$ is analytic when $|w|>r(T)$. Furthermore,
\[
\langle y(w),x_1\rangle=-\langle e(w),x_2\rangle,\quad r(T)<|w|<1.
\]
Thus, by the analytic continuation theorem, we know that $\langle y(w), x_1\rangle$ is analytic on $\CCC$. Since \[\lim_{|w|\rightarrow +\infty}\langle y(w),x_1\rangle =0,\] we see that $\langle y(w),x_1\rangle$ is a bounded entire function on $\CCC$. Thus we have
\[
\langle y(w),x_1\rangle=\langle e(w),x_2\rangle =0.
\]
Note that $\langle (w-T)^{-1}x, x_1\rangle=0$ for all $|w|>r(T)$, we have
\[
\left\langle \sum_{n=0}^\infty \left(\frac{T^nx}{w^{n+1}}\right), x_1\right\rangle=\sum_{n=0}^\infty \langle T^nx,x_1\rangle \frac{1}{w^{n+1}}=0,\quad |w|>r(T).
\]
It follows that $\langle T^n x, x_1\rangle=0$ for $n=0,1,\cdots$. Suppose $x$ is a cyclic vector of $T$. Then $x_1=0$. Since $S_1^*\in B_1(\DDD)\subset B_1(\Sigma)$, we have $span\{e(w): w\in \Sigma\}=\H$. This means $x_2=0$. Thus $span\{y(w)\oplus e(w):\,w\in \Sigma\}=\H\oplus \H$. Suppose $x$ is not a cyclic vector of $T$. Let $0\neq x_1\perp\{T^nx:\, n\geq 0\}$. Then $(x_1\oplus 0)\perp span\{y(w)\oplus e(w):\,w\in \Sigma\}$ and therefore $span\{y(w)\oplus e(w):\,w\in \Sigma\}\neq \H\oplus\H$. This implies that $T_x\notin B_1(\Sigma)$.

\end{proof}

\begin{lem}\label{ri}For every $x\in \mathcal{H}$, $x\neq 0$, define $S\in  \mathcal{B}(\mathcal{H}\oplus \mathcal{H})$ as follows
$$S=\left(\begin{matrix}T^{-1}&\quad 0\\
0&\quad S_1\\ \end{matrix}\right).$$ Then $S_x\triangleq S$ is a right inverse of $T_x$.
\end{lem}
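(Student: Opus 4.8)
The plan is to verify the claim by a direct block computation, checking that the given $S$ satisfies $T_x S = I_{\mathcal{H}\oplus\mathcal{H}}$. First I would recall the relevant conventions: $S_1$ denotes the forward shift, i.e.\ the adjoint of $S_1^*$, so that $S_1 e_k = e_{k+1}$ for all $k\ge 0$. Two consequences will be used repeatedly, namely that $S_1$ is an isometry with $S_1^* S_1 = I$, and that the range of $S_1$ is contained in the closed linear span of $\{e_1, e_2, \dots\}$, which is exactly $\{e_0\}^{\perp}$.

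Next I would carry out the multiplication of the two block operator matrices and examine the four resulting entries:
\[
T_x S=\begin{pmatrix} T & x\otimes e_0 \\ 0 & S_1^* \end{pmatrix}\begin{pmatrix} T^{-1} & 0 \\ 0 & S_1 \end{pmatrix}=\begin{pmatrix} TT^{-1} & (x\otimes e_0)S_1 \\ 0 & S_1^* S_1 \end{pmatrix}.
\]
The two diagonal blocks are immediate: $TT^{-1}=I$ because $T$ is invertible by hypothesis, and $S_1^* S_1=I$ because $S_1$ is an isometry. The lower-left block is trivially $0$. Hence the only entry requiring an argument is the upper-right block $(x\otimes e_0)S_1$.

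For that block I would use that $x\otimes e_0$ is the rank-one operator $\eta\mapsto \langle \eta, e_0\rangle x$, so that $(x\otimes e_0)S_1\,\eta=\langle S_1\eta, e_0\rangle x$ for every $\eta\in\mathcal{H}$. Since $S_1\eta\in\{e_0\}^{\perp}$, the scalar $\langle S_1\eta, e_0\rangle$ vanishes identically, giving $(x\otimes e_0)S_1=0$. This shows $T_x S=I$, so $S$ is a right inverse of $T_x$, as claimed.

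There is no real obstacle in this argument; it is a routine verification, and the sole nontrivial point is the vanishing of the off-diagonal block, which rests only on $S_1$ shifting away from the $e_0$-coordinate. The one feature worth flagging is why the statement asserts merely a \emph{right} inverse rather than a two-sided one: the reverse product $S T_x$ has lower-right block $S_1 S_1^*$, which is the orthogonal projection onto $\{e_0\}^{\perp}$ and not the identity, so $S$ cannot be a two-sided inverse of $T_x$. This is fully consistent with the Cowen--Douglas picture of Lemma \ref{Theorem 1.1}, where $T_x-w$ has a one-dimensional kernel on $\Omega$.
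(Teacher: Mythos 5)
Your proof is correct and follows essentially the same route as the paper: a direct block multiplication in which the only nontrivial point is the vanishing of $(x\otimes e_0)S_1$, which you establish via $\langle S_1\eta,e_0\rangle=0$ while the paper writes the same fact as $(x\otimes e_0)S_1=x\otimes S_1^*(e_0)=0$. Your closing remark on why $S_x$ is only a right inverse is a nice consistency check but not needed for the statement.
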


\begin{proof} Notice that $S_1^*(e_0)=0$.  Then we have $(x\otimes e_0)S_1=x\otimes S^*_1(e_0)=0,$ and
\[
T_xS_x=\left(\begin{matrix}T&\quad x\otimes e_0\\
0&\quad S_1^*\\ \end{matrix}\right)\left(\begin{matrix}T^{-1}&\quad 0\\
0&\quad S_1\\ \end{matrix}\right)
=\left(\begin{matrix}I\,&\quad (x\otimes e_0)S_1\\
0\,&\quad I\\ \end{matrix}\right)
=\begin{pmatrix}\quad I\quad &\, 0\quad\\
\quad 0\quad&\, I\quad\\ \end{pmatrix}.
\]

\end{proof}

Define \[
\M(x)=span\{(-T^{-(n+1)}x)\oplus e_n: n\geq 0\}.\]
Then
\[
T_x\left(\begin{matrix}
-T^{-1}x\\
e_0
\end{matrix}\right)=\left(\begin{matrix}
0\\
0
\end{matrix}\right), T_x\left(\begin{matrix}
-T^{-(n+1)}x\\
e_n
\end{matrix}\right)=\left(\begin{matrix}
-T^{-n}x\\
e_{n-1}
\end{matrix}\right)
\] for $n\geq 1$ and
\[
S_x\left(\begin{matrix}
-T^{-(n+1)}x\\
e_n
\end{matrix}\right)=\left(\begin{matrix}
-T^{-(n+2)}x\\
e_{n+1}
\end{matrix}\right)
\] for all $n\geq 0$. Thus $\M(x)\in Lat T_x\cap Lat S_x$.

We set

\begin{equation}
T_x=\left(\begin{matrix}
T&\quad x\otimes e_0\\
0&\quad S_1^*
\end{matrix}\right)\begin{matrix}
\H\\
\H
\end{matrix}=\left(\begin{matrix}
\hat{T}_x&\quad T_{1,2}\\
0&\quad T_2
\end{matrix}\right)\begin{matrix}
\M(x)\\
\M(x)^\perp
\end{matrix},\end{equation} and  \begin{equation} S_x=\left(\begin{matrix}
T^{-1}&\quad 0\\
0&\quad S_1
\end{matrix}\right)\begin{matrix}
\H\\
\H
\end{matrix}=\left(\begin{matrix}
\hat{S}_x&\quad S_{1,2}\\
0&\quad S_2
\end{matrix}\right)\begin{matrix}
\M(x)\\
\M(x)^\perp
\end{matrix}.\end{equation}

\begin{lem}\label{start} Let $a_n=P_{\M(x)}(0\oplus e_n)$ and $b_n=P_{\M(x)^\perp}(0\oplus e_n)$ for $n\geq 0$. Then we have
$$T_2b_n=b_{n-1}; \, \hat{S}_x^*a_n=a_{n-1}, n\geq 1, \, \hat{S}_x^*a_0=0,$$
and
$$\hat{T}^*_xa_n=a_{n+1};\, S_2b_n=b_{n+1}, n\geq 0.$$
\end{lem}

\begin{proof}

Since
\[
T_x\left(\begin{matrix}
0\\
e_n
\end{matrix}\right)=\left(\begin{matrix}
T&\quad x\otimes e_0\\
0&\quad S_1^*
\end{matrix}\right)
\left(\begin{matrix}
0\\
e_n
\end{matrix}\right)
=
\left(\begin{matrix}
0\\
e_{n-1}
\end{matrix}\right),\,\forall n\geq 1,
\] we have
\[
\left(\begin{matrix}
\hat{T}_x&\quad T_{12}\\
0&\quad T_2
\end{matrix}
\right)\left(\begin{matrix}
a_n\\
b_n
\end{matrix}\right)=
\left(\begin{matrix}
a_{n-1}\\
b_{n-1}
\end{matrix}
\right) ,\,\forall n\geq 1.
\]
Thus $T_2b_n=b_{n-1}$, $\forall n\geq 1$. A similar calculation shows that  $T_2b_0=P_{\M(x)^\perp}(x\oplus 0)$.

Since \[
T_x^*\left(\begin{matrix}
0\\
e_n
\end{matrix}\right)=\left(\begin{matrix}
T^*&\quad 0\\
e_0\otimes x&\quad S_1
\end{matrix}\right)
\left(\begin{matrix}
0\\
e_n
\end{matrix}\right)
=
\left(\begin{matrix}
0\\
e_{n+1}
\end{matrix}\right),\,\forall n\geq 0,
\] we have
\[
\left(\begin{matrix}
\hat{T}^*_x&\quad 0\\
T_{12}^*&\quad T^*_2
\end{matrix}
\right)\left(\begin{matrix}
a_n\\
b_n
\end{matrix}\right)=
\left(\begin{matrix}
a_{n+1}\\
b_{n+1}
\end{matrix}
\right),\,\forall n\geq 0.
\]
Thus $\hat{T}^*_xa_n=a_{n+1}$, $\forall n\geq 0$.

Since
\[
S_x\left(\begin{matrix}
0\\
e_n
\end{matrix}\right)=\left(\begin{matrix}
T^{-1}&\quad 0\\
0&\quad S_1
\end{matrix}\right)
\left(\begin{matrix}
0\\
e_n
\end{matrix}\right)
=
\left(\begin{matrix}
0\\
e_{n+1}
\end{matrix}\right), \,\forall n\geq 0,
\] we have
\[
\left(\begin{matrix}
\hat{S}_x&\quad S_{12}\\
0&\quad S_2
\end{matrix}
\right)\left(\begin{matrix}
a_n\\
b_n
\end{matrix}\right)=
\left(\begin{matrix}
a_{n+1}\\
b_{n+1}
\end{matrix}
\right),\,\forall n\geq 0.
\]
Thus $S_2b_n=b_{n+1}$, $\forall n\geq 0$.

Since
\[
S^*_x\left(\begin{matrix}
0\\
e_n
\end{matrix}\right)=\left(\begin{matrix}
\left(T^{-1}\right)^*&\quad 0\\
0&\quad S^*_1
\end{matrix}\right)
\left(\begin{matrix}
0\\
e_n
\end{matrix}\right)
=
\left(\begin{matrix}
0\\
e_{n-1}
\end{matrix}\right), \,\forall n\geq 0,
\] where we define $e_{-1}=0$, we have
\[
\left(\begin{matrix}
\hat{S}_x^*&\quad 0\\
S_{12}^*&\quad S_2^*
\end{matrix}
\right)\left(\begin{matrix}
a_n\\
b_n
\end{matrix}\right)=
\left(\begin{matrix}
a_{n-1}\\
b_{n-1}
\end{matrix}
\right),\,\forall n\geq 0.
\]
Thus $\hat{S}_x^*a_n=a_{n-1}$, $\forall n\geq 1$ and $\hat{S}_x^*a_0=0$.
\end{proof}

Note that
\[\langle a_n,-T^{-(m+1)}x\oplus e_m\rangle=\langle P_{\M(x)}(0\oplus e_n),-T^{-(m+1)}x\oplus e_m\rangle =\langle 0\oplus e_n,-T^{-(m+1)}x\oplus e_m\rangle=\delta_{n,m}.\]
 In particular,
$a_n\neq 0$ for all $n\geq 0$.

\begin{lem}\label{Lemma 1.4}
Let $T\in\B(\H)$, $w_0\in \mathbb{C}$ and $\delta>0$. Suppose $\forall w$, $|w-w_0|<\delta$, and
\begin{enumerate}
\item $Ran(T-w)=\H$;
\item $dimker(T-w)=1$.
\end{enumerate}
Then $span\{ ker(T-w_0)^n,\,n\geq 1\}=span\{ ker(T-w),\, |w-w_0|<\delta\}$.
\end{lem}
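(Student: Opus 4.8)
The plan is to realize both sides of the claimed equality as the closed linear span of a single holomorphic family of eigenvectors. Write $\Omega=\{w:\,|w-w_0|<\delta\}$, put $\mathcal{E}_0=\bigvee_{n\ge 1}\ker(T-w_0)^n$ and $\mathcal{E}_1=\bigvee_{w\in\Omega}\ker(T-w)$, so that the goal is $\mathcal{E}_0=\mathcal{E}_1$. The two hypotheses are precisely the local conditions of the Cowen--Douglas class: on $\Omega$ the operator $T-w$ has closed range (it is onto) and constant one-dimensional kernel. Consequently $w\mapsto\ker(T-w)$ is a holomorphic line bundle over $\Omega$, and since $\Omega$ is a disk, hence simply connected, it admits a global nowhere-vanishing holomorphic section $e\colon\Omega\to\H$ with $\ker(T-w)=\mathbb{C}\,e(w)$ for every $w\in\Omega$. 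No use is made of the completeness condition $\bigvee_w\ker(T-w)=\H$, which is not assumed here; only surjectivity and the constant kernel dimension enter. In particular $\mathcal{E}_1=\bigvee_{w\in\Omega}\mathbb{C}\,e(w)$.

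Next I would describe $\mathcal{E}_0$ in terms of $e$. Differentiating the identity $(T-w)e(w)=0$ at $w_0$ gives $(T-w_0)e^{(j)}(w_0)=j\,e^{(j-1)}(w_0)$ for all $j\ge 1$, whence $e^{(j)}(w_0)\in\ker(T-w_0)^{j+1}$ and the vectors $e(w_0),e'(w_0),\dots,e^{(k-1)}(w_0)$ are linearly independent. Since $T-w_0$ is onto with one-dimensional kernel one has $\dim\ker(T-w_0)^k\le k$, so these $k$ vectors form a basis of $\ker(T-w_0)^k$; this is exactly the content of Lemma \ref{cd3} in the present local setting. Hence $\ker(T-w_0)^k=\mathrm{Span}\{e(w_0),\dots,e^{(k-1)}(w_0)\}$ for every $k\ge 1$, and passing to the closed span of the union over all $k$ yields $\mathcal{E}_0=\bigvee_{j\ge 0}\mathbb{C}\,e^{(j)}(w_0)$.

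It then remains to match the two closed spans, which is a routine piece of vector-valued function theory. For $\mathcal{E}_1\subseteq\mathcal{E}_0$, the Taylor expansion $e(w)=\sum_{j\ge 0}\frac{(w-w_0)^j}{j!}e^{(j)}(w_0)$ converges in the norm of $\H$ throughout $\Omega$; each partial sum lies in $\mathrm{Span}\{e^{(j)}(w_0):j\ge 0\}\subseteq\mathcal{E}_0$, so $e(w)\in\mathcal{E}_0$ for every $w$ and hence $\mathcal{E}_1\subseteq\mathcal{E}_0$. For the reverse inclusion, fix $0<r<\delta$ and use the Cauchy integral formula
\[
e^{(j)}(w_0)=\frac{j!}{2\pi i}\oint_{|w-w_0|=r}\frac{e(w)}{(w-w_0)^{j+1}}\,dw ;
\]
the integral is a norm limit of its Riemann sums, each a finite linear combination of vectors $e(w)$ with $w\in\Omega$, so it lies in $\mathcal{E}_1$, and closedness gives $e^{(j)}(w_0)\in\mathcal{E}_1$ for all $j$. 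Thus $\mathcal{E}_0\subseteq\mathcal{E}_1$, and the two subspaces coincide.

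The only delicate point is the first step, the existence of the global holomorphic section $e$ on all of $\Omega$. The subtlety is that I am \emph{not} assuming $T\in B_1(\Omega)$ (the completeness condition may fail), so I cannot simply quote that $T$ belongs to a Cowen--Douglas class; instead I rely on the fact that the eigenvector bundle and its trivializing section are built from surjectivity and the constancy of $\dim\ker(T-w)$ alone. Concretely, a bounded right inverse $R$ of $T-w_0$ (available since $T-w_0$ is onto) gives the local section $e(w)=(I-(w-w_0)R)^{-1}e_0$ with $e_0\in\ker(T-w_0)\setminus\{0\}$, valid for $|w-w_0|<\|R\|^{-1}$, and these local sections patch into a single global one by the simple connectivity of the disk. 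Everything after that is soft: Lemma \ref{cd3} converts $\mathcal{E}_0$ into the span of the Taylor coefficients of $e$, and the equality of the two spans is the standard statement that the closed span of the values of an $\H$-valued holomorphic function on a disk equals the closed span of its derivatives at the centre.
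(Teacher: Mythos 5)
Your argument is correct, but it follows a genuinely different route from the paper's. The paper never constructs a holomorphic eigenvector section explicitly: it sets $\K=\bigvee_{|w-w_0|<\delta}\ker(T-w)$, observes $\K\in Lat(T)$, writes $T$ as an upper-triangular $2\times 2$ matrix relative to $\K\oplus\K^\perp$, proves by a lower-bound argument that the corner $T_2$ is injective, then uses a right inverse of $T$ (available from surjectivity) to upgrade this to invertibility of $T_2-w$ and surjectivity of $T_1-w$ on $\K$; since the completeness axiom holds on $\K$ by construction, $T_1=T|_{\K}\in B_1(\Omega)$, and the conclusion follows from $\ker T^n=\ker T_1^n$ together with Proposition 1.7.1 of Cowen--Douglas. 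You instead bypass the space decomposition entirely: you build the section $e(w)=(I-(w-w_0)R)^{-1}e_0$ from a right inverse $R$, identify $\ker(T-w_0)^k$ with the span of the first $k$ Taylor coefficients (your dimension count $\dim\ker(T-w_0)^k\le k$ and the independence of the $e^{(j)}(w_0)$ are both verified correctly), and then equate the two closed spans by Taylor expansion in one direction and the Cauchy integral formula in the other. What your route buys is transparency and self-containedness of the local analysis, correctly avoiding any appeal to the completeness condition on $\H$; what it costs is the one nontrivial input you rightly flag, namely the global triviality of the kernel line bundle over the disk (Cousin II / Grauert), whereas the paper outsources exactly that machinery to the cited Cowen--Douglas results applied to the restriction $T|_{\K}$. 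In substance the two proofs lean on the same underlying bundle theory, but your packaging of it is different and equally valid.
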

\begin{proof}
We may assume that $w_0=0$. Then $Ran T=\H$ and $dimker T=1$. Assume that $T\xi=0$ for some $\xi\neq 0$. Let $\K=span\{ ker(T-w),\, |w|<\delta\}$. Then $\K\in Lat T$ and $\xi\in\K$. Write
\[
T=\left(\begin{matrix}
T_1&\quad T_{12}\\
0&\quad T_2
\end{matrix}
\right)\begin{matrix}
\K\\
\K^\perp
\end{matrix}.
\]
Claim $T_2$ is injective. Otherwise $T_2\eta=0$ for some $\eta\neq 0$. Let $e(w)$ be a nonzero vector such that $(T-w)e(w)=0$. Then $Ran T_1$ contains $span\{ we(w):\,{|w|<\delta}\}$ and  $Ran T_1$ is dense in $\K$. Therefore, there exists a sequence of vectors $\xi_n\in \K$ such that
\[
\lim\limits_{n\rightarrow +\infty} T\left(\begin{matrix}
\xi_n\\
\eta
\end{matrix}\right)=\left(\begin{matrix}
0\\
0
\end{matrix}\right).
\]
Write
\[
\left(\begin{matrix}
\xi_n\\
\eta
\end{matrix}\right)=\left(\begin{matrix}
\xi_n-\alpha_n\xi\\
\eta
\end{matrix}\right)+\alpha_n\left(\begin{matrix}
\xi\\
0
\end{matrix}\right)
\]
as an orthogonal decomposition. Note that $T$ is an invertible map from $\left[\xi\oplus 0\right]^\perp$ onto $Ran T$. Therefore, there exists a $K>0$ such that
\[
\|T\zeta\|\geq K\|\zeta\|,\quad \forall \zeta\in\left[\left(\begin{matrix}
\xi\\
0
\end{matrix}\right)\right]^\perp.
\]

So
$$\lim\limits_{n\rightarrow\infty}T\left(\begin{matrix}
\xi_n-\alpha_n\xi\\
\eta
\end{matrix}\right)=\left(\begin{matrix}
0\\
0
\end{matrix}\right)$$
implies that
\[0=\lim\limits_{n\rightarrow +\infty}\left\|T\left(\begin{matrix}
\xi_n-\alpha_n\xi\\
\eta
\end{matrix}\right)\right\|\geq\limsup_{n\rightarrow\infty}K\left\|\left(\begin{matrix}
\xi_n-\alpha_n\xi\\
\eta
\end{matrix}\right)\right\|\geq K\|\eta\|.\]
Therefore, $\eta=0$. This is a contradiction and thus $T_2$ is injective.

Since $RanT=\H$, there is an operator $$S=\left(\begin{matrix} S_{11}&\quad S_{12}\\
S_{21}&\quad S_{22}\end{matrix}\right)\begin{matrix} \K\\ \K^\perp\end{matrix}$$ satisfying
\[
TS=\left(\begin{matrix}
T_1&\quad T_{12}\\
0&\quad T_2
\end{matrix}
\right)\left(\begin{matrix} S_{11}&\quad S_{12}\\
S_{21}&\quad S_{22}\end{matrix}\right)=1.
\]
This implies that $T_{2}S_{21}=0$ and $T_2S_{22}=1$. Since $T_2$ is injective, $S_{21}=0$ and $T_2$ is invertible.
Then $T_1S_{11}=1$. So $Ran T_1=\K$. Similarly, $Ran(T_1-w)=\K$ for all $|w|<\delta$. Therefore, $T_1\in B_1(\Omega)$.

Note that for $n\geq 1$,
\[
\left(\begin{matrix}
T_1&\quad T_{12}\\
0&\quad T_2
\end{matrix}
\right)^n=\left(\begin{matrix}
T_1^n&\quad\ast\\
0&\quad T_2^n
\end{matrix}
\right).
\]
So
\[
\left(\begin{matrix}
T_1^n&\quad \ast\\
0&\quad T_2^n
\end{matrix}
\right)\left(\begin{matrix}
\xi\\
\eta
\end{matrix}
\right)=\left(\begin{matrix}
0\\
0
\end{matrix}
\right)
\]
implies that $T_2^n\eta=0$ and therefore $\eta=0$. Furthermore, $T_1^n\xi=0$ and $ker T^n=ker T_1^n$. By Proposition~\ref{cd2},  $$span\{ ker(T-w_0)^n,\,n\geq 1\}=span\{ ker(T-w),\, |w-w_0|<\delta\}=\K.$$
\end{proof}

\begin{lem}\label{Theorem 1.5}
Let $T\in\B(\H)$ be invertible, $r(T)<1$, and \[T_x=\left(\begin{matrix}
T& \quad x\otimes e_0\\
0& \quad S_1^*
\end{matrix}\right)\begin{matrix}
\H\\
\H
\end{matrix}=\left(\begin{matrix}
\hat{T}_x&\quad T_{1,2}\\
0& \quad T_2
\end{matrix}\right)\begin{matrix}
\M(x)\\
\M(x)^\perp
\end{matrix}.\] Let $\Phi_0$ be the connected component of $\mathbb{D}\setminus\sigma(T)$ with $0\in \Phi_0$. Let $\Sigma$ be the connected component of $\DDD\setminus \sigma(T)$ which contains $\{w\in \DDD: r(T)<|w|<1\}$.  If $x$ is a cyclic vector of $T$ and $\M(x)\neq \H\oplus\H$, then
\begin{enumerate}
\item $\hat{T}_x\in B_1(\Phi_0)$ and $ \sigma(\hat{T}_x)\subset \sigma(T)^{\land}$ ;
\item $T_2$ is invertible and $T_2\in B_1(\Sigma)$.
\end{enumerate}
\end{lem}

\begin{proof}
(1). By (1) of Lemma~\ref{Theorem 1.1}, $\forall w \in \Phi$, $Ran(T_x-w)=\H\oplus \H=\M(x)\oplus \M(x)^\perp$ and $dimker(T_x-w)=1$. By Lemma~\ref{Lemma 1.4},
\[
\M(x)=span\{-T^{-(n+1)}x\oplus e_n: n\geq 0\}=span\{ ker T_x^n:\,n\geq 1\}=span\{ ker(T_x-w):\, {w\in \Phi_0}\}.
\]
 Therefore, $ker (T_x-w)\subset \M(x)$ for $w\in \Phi_0$. So $ker(\hat{T}_x-w)=ker(T_x-w)=\mathbb{C} f(w)\subset \M(x)$, where $f(w)=y(w)\oplus e(w)$ as in the proof of Lemma~\ref{Theorem 1.1}. Since
\[
(\hat{T}_x-w_1)f(w)=(w-w_1)f(w),
\]
$Ran(\hat{T}_x-w_1)$ is dense in $\M(x)$, $w_1\in \Phi_0$.

Suppose $Ran(\hat{T}_x-w)\neq \M(x)$ for some $w\in \Phi_0$. Then there exists a $y\in \M(x)$ such that $y\notin Ran(\hat{T}_x-w)$ and $y\oplus 0\in Ran(T_x-w)$. Therefore, there exists a vector $\xi\oplus \eta\in \M(x)\oplus \M(x)^\perp$ such that $\eta\neq 0$ and
\[
(T_x-w)\left(\begin{matrix}
\xi\\
\eta\end{matrix}\right)=\left(\begin{matrix}
y\\
0\end{matrix}\right),
\]
i.e.,
\[
\left(\begin{matrix}
\hat{T}_x-w &\quad T_{12}\\
0&\quad T_2-w\end{matrix}\right)\left(\begin{matrix}
\xi\\
\eta\end{matrix}\right)=\left(\begin{matrix}
y\\
0\end{matrix}\right).
\]
Since $Ran(\hat{T}_x-w)$ is dense in $\M(x)$, there exists a sequence of vectors $\xi_n$ such that
\[
\lim\limits_{n\rightarrow +\infty}(\hat{T}_x-w)\xi_n+T_{12}\eta=0,
\]
i.e.,
\[
\lim\limits_{n\rightarrow +\infty}(T_x-w)\left(\begin{matrix}
\xi_n\\
\eta\end{matrix}\right)=\left(\begin{matrix}
0\\
0\end{matrix}\right).
\]

Write
\[
\left(\begin{matrix}
\xi_n\\
\eta\end{matrix}\right)=\left(\begin{matrix}
\xi_n-\alpha_n f(w)\\
\eta\end{matrix}\right)+\alpha_n\left(\begin{matrix}
f(w)\\
0\end{matrix}\right)
\]
as an orthogonal decomposition. Note that $(T_x-w)$ is an invertible map from $\left[f(w)\oplus 0\right]^\perp$ onto $Ran(T_x-w) =\H\oplus \H$. Therefore, there exists a $K>0$ such that
\[
\|(T_x-w)\zeta\|\geq K\|\zeta\|,\,\forall \zeta\in\left[\left(\begin{matrix}
f(w)\\
0\end{matrix}\right)\right]^\perp.
\]
So $\lim\limits_{n\rightarrow +\infty}(T_x-w)\left(\begin{matrix}
\xi_n-\alpha_nf(w)\\
\eta\end{matrix}\right)=\left(\begin{matrix}
0\\
0\end{matrix}\right)$ implies that
\[
0=\lim\limits_{n\rightarrow +\infty}\left\|(T_x-w)\left(\begin{matrix}
\xi_n-\alpha_n f(w)\\
\eta\end{matrix}\right)\right\| \geq\limsup_{n\rightarrow +\infty} K\left\|
\left(\begin{matrix}
\xi_n-\alpha_n f(w)\\
\eta\end{matrix}\right)\right\|\geq K\|\eta\|.
\]
Thus $\eta=0$. This is a contradiction. Hence, $\hat{T}_x\in B_1(\Phi_0)$.

Since $\hat{T}_x=T_x|_{\M(x)}$, $\sigma(\hat{T}_x)\subseteq \bar{\mathbb{D}}$. In the following we show $\forall w\in \mathbb{D}\setminus  \sigma(T)^{\land}$, $w\in \rho(\hat{T}_x)$.

By (1) of Lemma  \ref{Theorem 1.1}, $\forall w\in \mathbb{D}\setminus  \sigma(T)^{\land}$,  $$dimker(T_x-w)=1~\mbox{and}~Ran(T_x-w)=\H\oplus \H.$$

Let
$f(w)$ be a nonzero eigenvector of $T_x-w$. Then
\[
f(w)=\left(\begin{matrix}
f_1(w)\\
f_2(w)\end{matrix}\right)
\] with respect to the decomposition $\H\oplus \H=\M(x)\oplus\M(x)^\perp$.

{\bf Claim:}\, $f_2(w)\neq 0$.

 Suppose $f_2(w)=0$. Then $f(w)\in \M(x)$. Note that \[Ran (T_x-w)=\H\oplus\H=\M(x)\oplus \M(x)^\perp\] and $Ran(\hat{T}_x-w)$
is dense in $\M(x)$. So same argument as above shows that $Ran(\hat{T}_x-w)=\M(x)$. Since $T_x-w$ is surjective, $T_2-w$ is also surjective. Assume that $T_2-w$ is not injective. Then there exists an $\eta\in \M(x)^\perp$, $\eta\neq 0$, such that $(T_2-w)\eta=0$. Let $\xi\in \M(x)$ be such that $(\hat{T}_x-w)\xi=-T_{12}\eta$. Then
$$(T_x-w)\left(\begin{matrix}
\xi\\
\eta\end{matrix}\right)=\left(\begin{matrix}
0\\
0\end{matrix}\right).$$ So $\xi\oplus \eta, f(w)\in ker(T_x-w)$ are linearly independent. This contradicts to $dimker(T_x-w)=1$. Thus $T_2-w$ is injective and therefore $T_2-w$ is invertible. So there is a $\delta>0$ such that $T_2-w'$ is invertible for all $w'$ satisfying $|w'-w|<\delta$. Suppose
\[
\left(\begin{matrix}
0\\
0\end{matrix}\right)=(T_x-w')\left(\begin{matrix}
\xi\\
\eta\end{matrix}\right)=
\left(\begin{matrix}
\hat{T}_x-w'&T_{12}\\
0&T_2-w'\end{matrix}\right)\left(\begin{matrix}
\xi\\
\eta\end{matrix}\right)
=\left(\begin{matrix}
\ast\\
(T_2-w')\eta\end{matrix}\right).
\]
Then $(T_2-w')\eta=0$ implies that $\eta=0$. So $ker(T_x-w')\subseteq \M(x)$. Since $w\in \mathbb{D}\setminus  \sigma(T)^{\land}$, $w\in \Sigma$. By (2) of Lemma~\ref{Theorem 1.1},
\[
\H\oplus\H=span\{ker(T_x-w'):\, {|w'-w|<\delta}\}\subset \M(x).
\]
This contradicts to the assumption of the lemma.

Thus
\[
f(w)=\left(\begin{matrix}
f_1(w)\\
f_2(w)\end{matrix}\right)
\] and $f_2(w)\neq 0$. Since $dimker(T_x-w)=1$, $\hat{T}_x-w$ is injective for  $w\in \mathbb{D}\setminus  \sigma(T)^{\land}$. Since $\hat{T}_x\in B_1(\Omega)$, $Ran(\hat{T}_x-w)$ is dense in $\H$ by Lemma~\ref{3.4}. Claim $\hat{T}_x-w$ is surjective. Otherwise, there exists a sequence of unit vectors $\xi_n\in \M$ such that $\lim\limits_{n\rightarrow +\infty}(\hat{T}_x-w)\xi_n=0$.
So
\[
\lim\limits_{n\rightarrow +\infty}(T_x-w)\left(\begin{matrix}
\xi_n\\
0\end{matrix}\right)=\left(\begin{matrix}
0\\
0\end{matrix}\right).
\]
Let
\[
\left(\begin{matrix}
\xi_n\\
0\end{matrix}\right)=\left(\begin{matrix}
\xi_n-\alpha_nf_1(w)\\
-\alpha_nf_2(w)\end{matrix}\right)+\alpha_n\left(\begin{matrix}
f_1(w)\\
f_2(w)\end{matrix}\right)
\]
be an orthogonal decomposition. Then $T_x-w$ is an invertible map from $\left[f_1(w)\oplus f_2(w)\right]^\perp$ onto $Ran(T_x-w)=\H\oplus\H$. Therefore,
\[
\lim\limits_{n\rightarrow +\infty}(T_x-w)\left(\begin{matrix}
\xi_n-\alpha_nf_1(w)\\
-\alpha_nf_2(w)\end{matrix}\right)=\left(\begin{matrix}
0\\
0\end{matrix}\right)
\]
implies that
\[
\lim\limits_{n\rightarrow +\infty}\left\|\left(\begin{matrix}
\xi_n-\alpha_nf_1(w)\\
-\alpha_nf_2(w)\end{matrix}\right)\right\|=0.
\]
Notice that $$\left\|\left(\begin{matrix}
\xi_n-\alpha_nf_1(w)\\
-\alpha_nf_2(w)\end{matrix}\right)\right\|^2=\|\xi_n-\alpha_nf_1(w)\|^2+\|\alpha_nf_2(w)\|^2. $$ So $\lim\limits_{n\rightarrow +\infty} \alpha_n=0$. Thus $\lim\limits_{n\rightarrow +\infty}\left\|\xi_n\oplus 0\right\|=0$, which contradicts to the assumption $\|\xi_n\|=1$ for all $n$. This proves that $\forall w\in \DDD\setminus \sigma(T)^\land$, $w\in \rho(\hat{T}_x)$.
Therefore, $\sigma(\hat{T}_x)\subset  \sigma(T)^{\land}$.

(2). By Lemma~\ref{Theorem 1.1}, $T_x\in B_1(\Sigma)$. Therefore, $Ran(T_x-w)=\H\oplus\H=\M\oplus \M^\perp$, $\forall w\in \Sigma$. It is clear that $Ran(T_2-w)=\M(x)^\perp$, $\forall w\in \Sigma$. Let $f(w)$ be a nonzero eigenvector of $T_x-w$, $w\in\Sigma$. Then
\[
f(w)=\left(\begin{matrix}
f_1(w)\\
f_2(w)\end{matrix}\right)
\] and $f_2(w)\neq 0$. It is clear that $(T_2-w)f_2(w)=0$. Assume that $(T_2-w)\eta=0$. Then
\[
(T_x-w)\begin{pmatrix}
-(\hat{T}_x-w)^{-1} T_{12}(\eta)\\
\eta
\end{pmatrix}=0.
\]
Since $dimker(T_x-w)=1$, $f_1(w)\oplus f_2(w)$ and $
(-(\hat{T}_x-w)^{-1} T_{12}(\eta))\oplus
\eta$ are linearly dependent. Therefore $f_2(w)$ and $\eta$ are linearly dependent. This implies that $dimker(T_2-w)=1$.
 Since
  $span\{f(w):\,w\in \Sigma\}=\H\oplus\H$,
 $span\{f_2(w):\, w\in \Sigma\}=\M(x)^\perp$. Hence, $T_2\in B_1(\Sigma)$.

 By Lemma~\ref{Theorem 1.1}, $RanT_x=\H\oplus\H=\M(x)\oplus \M(x)^\perp$, so $RanT_2=\M(x)^\perp$. Also we have $dimker T_x=1$. This implies that $ker T_x=ker\hat{T}_x$. Since $Ran\hat{T}_x=\M(x)$, $T_2$ is injective. This proves that $T_2$ is invertible.
\end{proof}

\begin{lem}\label{Lemma 2.1}
Let $T\in\mathcal{B}(\H)$ be an invertible bounded linear operator. For $x\in \H$, let $\N_{-k}(x)=span\{T^{-n}x:\, n\geq k\}$. Then $\cap_{k=1}^\infty \N_{-k}(x)\in Lat(T)$.
\end{lem}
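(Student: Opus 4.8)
The plan is to exploit the nested structure of the subspaces $\N_{-k}$ together with the elementary fact that $T$ shifts the generating set down by one index. First I would record that the family is decreasing: since $\{T^{-n}x:\,n\geq k+1\}\subseteq\{T^{-n}x:\,n\geq k\}$, taking closed linear spans gives $\N_{-(k+1)}\subseteq\N_{-k}$. Hence each $\N_{-k}$ is a closed subspace, and so is $\N:=\bigcap_{k=1}^\infty\N_{-k}$, being an intersection of closed subspaces. Here I use the convention from the text that $Span$ denotes the norm closure of the finite linear span, together with the standing assumption that $T$ is invertible, so that $T^{-n}x$ is defined.

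The key observation is the identity $T(T^{-n}x)=T^{-(n-1)}x$. Thus $T$ sends each generator $T^{-n}x$ of $\N_{-k}$ (with $n\geq k$) to $T^{-(n-1)}x$, and since $n-1\geq k-1$ this lies in $\{T^{-m}x:\,m\geq k-1\}$. As $T$ is bounded and linear, it carries finite linear combinations of the generators of $\N_{-k}$ into $\N_{-(k-1)}$ and, by continuity, carries the whole norm closure there as well. This yields $T\N_{-(k+1)}\subseteq\N_{-k}$ for every $k\geq 1$.

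With these two facts in hand the conclusion follows by a short chase of the intersection. Let $y\in\N$ and fix $k\geq 1$. Since $y$ lies in the intersection, in particular $y\in\N_{-(k+1)}$, whence $Ty\in T\N_{-(k+1)}\subseteq\N_{-k}$. As $k\geq 1$ was arbitrary, $Ty\in\bigcap_{k=1}^\infty\N_{-k}=\N$, which is precisely $T\N\subseteq\N$, i.e. $\N\in Lat(T)$.

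There is essentially no hard step here: the whole argument rests on the index shift $T^{-n}\mapsto T^{-(n-1)}$ interacting nicely with the nesting of the $\N_{-k}$. The only points needing minor care are invoking the continuity of $T$ to pass from the generating set to its closed span, and keeping the index bookkeeping straight so that the target subspace always has index $\geq 1$; indeed, applying $T$ to $\N_{-(k+1)}$ lands in $\N_{-k}$ with $k\geq 1$, so the auxiliary subspace with index $0$ is never needed.
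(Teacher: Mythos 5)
Your proof is correct and follows essentially the same route as the paper's: both arguments rest on the index shift $T(T^{-n}x)=T^{-(n-1)}x$ together with continuity of $T$ to conclude $T\N_{-(k+1)}\subseteq\N_{-k}$, and then intersect over $k$ (the paper merely writes out the approximation by finite linear combinations explicitly instead of stating the inclusion of closed spans abstractly). No gaps.
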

\begin{proof}
Choose $y\in \cap_{k=1}^\infty \N_{-k}(x)$, we  prove $Ty\in \cap_{k=1}^\infty \N_{-k}(x)$. Since $y\in \N_{-2}(x)$, $\forall\epsilon>0$, there exists $\sum\limits_{i=2}^{m_2}\alpha_iT^{-i}x$, $\alpha_i\in \mathbb{C}$, such that
\[
\left\|\sum\limits_{i=2}^{m_2}\alpha_iT^{-i}x-y\right\|<\epsilon.
\]
Thus
\[
\left\|\sum\limits_{i=2}^{m_2}\alpha_i T^{-i+1}x-Ty\right\|=\left\|T\left(\sum\limits_{i=2}^{m_2}\alpha_iT^{-i}x-y\right)\right\|\leq \|T\|\left\|\sum\limits_{i=2}^{m_2}\alpha_iT^{-i}x-y\right\|<\|T\|\epsilon.
\]
It follows that $Ty\in \N_{-1}(x)$. Similarly, we can show that $Ty\in \N_{-k}(x)$ for all $k\geq 1$ and therefore, $Ty\in \cap_{k=1}^\infty \N_{-k}(x)$.
\end{proof}

\begin{lem}\label{Lemma 5.1}
Suppose $T\in\B(\H)$ is an invertible transitive  operator. Let $\N_{-k}(x)=span\{T^{n}x: n\leq -k\}$ for $k\geq 1$. If $\N(x)\triangleq\N_{-1}(x)\neq \H$, then $\{\N_{-k}(x)\}$ is a strictly decreasing sequence and $\cap_{k=1}^\infty \N_{-k}(x)=\{0\}$.
\end{lem}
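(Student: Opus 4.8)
The plan is to prove the two assertions separately, disposing first of the statement about the intersection, which follows almost immediately from what is already available, and then treating the strict monotonicity by a contradiction argument that exploits the invertibility of $T$.

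For the intersection, note that with the present notation $\N_{-k}=Span\{T^{n}x:n\leq -k\}=Span\{T^{-n}x:n\geq k\}$, so Lemma~\ref{Lemma 2.1} applies verbatim and gives $\cap_{k=1}^\infty \N_{-k}\in Lat(T)$. Since $T$ is transitive, this invariant subspace is either $\{0\}$ or $\H$. But $\cap_{k\geq 1}\N_{-k}\subseteq \N_{-1}$, and $\N_{-1}\neq \H$ by hypothesis, so the intersection cannot be $\H$; hence $\cap_{k=1}^\infty \N_{-k}=\{0\}$. This settles the second assertion and, importantly, it does not require the strict-decrease statement, so I can use it freely in what follows.

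For the strict decrease, the inclusions $\N_{-1}\supseteq \N_{-2}\supseteq\cdots$ are clear because the generators of $\N_{-(k+1)}$ form a subfamily of those of $\N_{-k}$. I would argue by contradiction: suppose the chain fails to be strictly decreasing, so $\N_{-k}=\N_{-(k+1)}$ for some $k\geq 1$. The crucial point is that $T^{-1}$ is a bounded invertible operator, hence a linear homeomorphism of $\H$, and therefore carries a closed linear span onto a closed linear span; concretely $T^{-1}(\N_{-k})=Span\{T^{-(n+1)}x:n\geq k\}=Span\{T^{-m}x:m\geq k+1\}=\N_{-(k+1)}$, with genuine equality. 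Applying $T^{-1}$ to $\N_{-k}=\N_{-(k+1)}$ then yields $\N_{-(k+1)}=\N_{-(k+2)}$, and by induction $\N_{-j}=\N_{-k}$ for every $j\geq k$. Consequently $\cap_{j=1}^\infty \N_{-j}=\N_{-k}$, which is nonzero since $x\neq 0$ and $T^{-k}$ invertible force $T^{-k}x\neq 0$ and $T^{-k}x\in \N_{-k}$. This contradicts the triviality of the intersection established in the previous paragraph, so the chain must be strictly decreasing.

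The one step demanding care, and the only place where invertibility is genuinely used, is the exact equality $T^{-1}(\N_{-k})=\N_{-(k+1)}$. For a general bounded operator one would obtain merely the inclusion $T^{-1}(\N_{-k})\subseteq \N_{-(k+1)}$, which would not propagate an equality $\N_{-k}=\N_{-(k+1)}$ forward along the chain; it is precisely the homeomorphism property of $T^{-1}$ that makes the image of a closed span a closed span and thereby forces the chain to stabilize once it repeats, producing the contradiction.
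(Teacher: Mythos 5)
Your proof is correct, but the argument for strict monotonicity runs in the opposite direction from the paper's. The paper pushes the hypothetical equality $\N_{-k}=\N_{-(k+1)}$ \emph{upward}: applying $T$ (which maps $\N_{-(k+1)}$ into $\N_{-k}$) it gets $T^{-(k-1)}x\in\N_{-(k+1)}$, hence $\N_{-(k-1)}=\N_{-(k+1)}$, and by induction $T^nx\in\N_{-(k+1)}$ for all $n$; since transitivity makes $x$ cyclic, this forces $\N_{-(k+1)}=\H$, contradicting $\N_{-1}\neq\H$ directly and independently of the intersection claim. You instead push the equality \emph{downward}: since $T^{-1}$ is a linear homeomorphism it carries closed spans onto closed spans, so $T^{-1}(\N_{-k})=\N_{-(k+1)}$ exactly, the chain stabilizes at $\N_{-k}$ from index $k$ on, and then $\cap_j\N_{-j}=\N_{-k}\ni T^{-k}x\neq 0$ contradicts the triviality of the intersection, which you (correctly, and just as the paper does via Lemma~\ref{Lemma 2.1} and transitivity) establish first. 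Your route makes the monotonicity a corollary of the intersection statement and isolates cleanly the one place invertibility is used, at the cost of reversing the logical order of the two assertions; the paper's route proves each assertion independently and leans on cyclicity of $x$ (itself a consequence of transitivity) rather than on the homeomorphism property of $T^{-1}$. Both are sound.
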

\begin{proof}
By Lemma~\ref{Lemma 2.1}, $\cap_{k=1}^\infty \N_{-k}(x)\in Lat T$. Since $\N(x)=\N_{-1}(x)\neq \H$, $\cap_{k=1}^\infty \N_{-k}(x)=\{0\}$. Suppose $\N_{-k}(x)=\N_{-(k+1)}(x)$ for some $k\geq 1$. Then $T^{-k}x\in \N_{-(k+1)}(x)$. Hence, $T^{-(k+1)}x\in T^{-1}\N_{-(k+1)}(x)=\N_{-(k+2)}(x)$. This implies that $\N_{-(k+2)}(x)=\N_{-(k+1)}(x)=\N_{-k}(x)$. By induction, we have $\N_{-n}(x)=\N_{-k}(x)$ for all $n\geq k$.  This contradicts to $\cap_{k=1}^\infty \N_{-k}(x)=\{0\}$.
\end{proof}

\begin{lem}\label{Theorem 5.5}
Suppose an invertible operator $T\in\B(\H)$ is  transitive, $r(T)<1$,   and \[\N(x)\triangleq\N_{-1}(x)=span\{T^{n} x:\,n\leq -1\}\neq \H.\] Then the map $B_x: (-T^{-(n+1)}x)\oplus e_n\rightarrow -T^{-(n+1)}x$, $n\geq 0$, extends to a bounded linear isomorphism from $\M(x)=span\{(-T^{-(n+1)}x)\oplus e_n:\,n\geq 0\}$ onto $\N(x)$.
\end{lem}
\begin{proof}
Recall that $a_n=P_{\M(x)}(0\oplus e_n)$ (see Lemma~\ref{start}).
For $n\geq 0$, define $f_n(x)=\langle x,a_n\rangle$ for $x\in \H\oplus \H$. By Lemma~\ref{Theorem 1.5}, $\sigma(\hat{T}_x^* )\subset \sigma(T)^{\land}$, so the spectral radius of $\hat{T}_x^*$ is strictly less than a positive number $r<1$.   By Lemma \ref{start},  $\hat{T}_x^*a_n=a_{n+1}$ and $a_n=\left(\hat{T}_x^*\right)^n a_0$ for $n\geq 0$.
Thus $\|f_n\|=\|a_n\|\leq\left(r+\epsilon\right)^n\|a_0\|$ for sufficient large $N$ such that $\sum\limits_{n=N+1}^\infty \|a_n\|=\theta<1$. For each $y\in \H\oplus\H$, put
\[
A(y)=y+\sum\limits_{n=N+1}^\infty f_n(y)((-T^{-(n+1)}x)\oplus 0-(-T^{-(n+1)}x)\oplus e_n).
\]
Note that $$f_n((-T^{-(k+1)}x)\oplus e_k)=\langle (-T^{-(k+1)}x)\oplus e_k, a_n\rangle=\langle (-T^{-(k+1)}x)\oplus e_k, P_{\M(x)}(0\oplus e_n)\rangle$$
$$=\langle P_{\M(x)}(-T^{-(k+1)}x)\oplus e_k, 0\oplus e_n\rangle=\langle (-T^{-(k+1)}x)\oplus e_k, 0\oplus e_n\rangle=\delta_{k,n}. $$
Then for $k\geq N+1$, we have
\[
A((-T^{-(k+1)}x)\oplus e_k)\]\[=(-T^{-(k+1)}x)\oplus e_k+\sum\limits_{n=1}^\infty f_n((-T^{-(k+1)}x)\oplus e_k)((-T^{-(n+1)}x)\oplus 0-(-T^{-(n+1)}x)\oplus e_n)\]\[=(-T^{-(k+1)}x)\oplus 0.
\]
Note that
\[
\|A-1\|\leq \sum\limits_{n=N+1}^\infty\|f_n\|\|(-T^{-(n+1)}x)\oplus 0-(-T^{-(n+1)}x)\oplus e_n\|=\sum\limits_{n=N+1}^\infty \|a_n\|=\theta<1.
\]
So $A$ is an invertible bounded linear operator on $\H\oplus\H$ and $A$ maps $$span\{(-T^{-(n+1)}x)\oplus e_n:n\geq N+1\}$$ onto $span\{-T^{-(n+1)}x:n\geq N+1\}$.

{\bf Claim}\,\, $span\{T^{-1}x,\cdots,T^{-(N+1)}x\}\cap span\{T^{-(N+2)}x,T^{-(N+3)}x,\cdots\}=\{0\}$. Suppose \[w\in span \{T^{-1}x, \cdots, T^{-(N+1)}x\}\] and \[v\in span\{T^{-(N+2)}x,T^{-(N+3)}x,\cdots\}\] satisfy $w=v$. If $w\neq 0$, then $w=\sum\limits_{i=1}^{N+1}\alpha_i T^{-i}x=z$. We may assume that $\alpha_{j}=-1$ and $\alpha_{i}=0$ for $1\leq i<j$. Then
\[
T^{-j}x=\sum\limits_{i=j+1}^{N+1} \alpha_i T^{-i}x+z\in \N_{-(j+1)}(x).
\]
This implies that $\N_{-j}(x)=\N_{-(j+1)}(x)$, which contradicts to Lemma~\ref{Lemma 5.1}. Let $x'\in \N(x)$. Then there exist a sequence of vectors $\{x_n\}\in \N(x)$ such that each $x_n$ can be written as a finite linear combinations of $\{T^{-n}x\}_{n=1}^\infty$ and $\lim\limits_{n\rightarrow\infty}\|x_n-x'\|=0$. In particular, there is $\{y_n\}\in span\{T^{-1}x,T^{-2}x,\cdots,T^{-(N+1)}x\}$ and $z_n\in span\{T^{-(N+2)}x,T^{-(N+3)}x,\cdots\}$ such that $x_n=y_n+z_n$. Let $\pi$ be the quotient map from $\N$ onto $\L/span\{T^{-(N+2)}x,T^{-(N+3)}x,\cdots\}$. Then $\pi(y_n)=\pi(x_n)$ is a Cauchy sequence in  $\L/span\{T^{-(N+2)}x,T^{-(N+3)}x,\cdots\}$. Clearly, $\pi$ is a surjective map from $span\{T^{-1}x,T^{-2}x,\cdots,T^{-(N+1)}x\}$ onto $span\{\pi(T^{-1}x),\pi(T^{-2}x),\cdots,\pi(T^{-(N+1)}x)\}$. Suppose $\pi(w_1T^{-1}x+w_2T^{-2}x+\cdots+w_{N+1}T^{-(N+1)}x)=0$. Then there exists a \[z\in span\{T^{-(N+2)}x,T^{-(N+3)}x,\cdots\}\] such that $w_1T^{-1}x+w_2T^{-2}x+\cdots+w_{N+1}T^{-(N+1)}x=z$. By the above argument, $w_1T^{-1}x+w_2T^{-2}x+\cdots+w_{N+1}T^{-(N+1)}x=z=0$. So $\pi$ is an injective map from  \[span\{T^{-1}x,T^{-2}x,\cdots,T^{-(N+1)}x\}\] onto \[span\{\pi(T^{-1}x),\pi(T^{-2}x),\cdots,\pi(T^{-(N+1)}x)\}.\] Since $span\{T^{-1}x,T^{-2}x,\cdots,T^{-(N+1)}x\}$ is finite dimensional, $\{y_n\}$ is a Cauchy sequence. Thus $\{z_n\}$ is also a Cauchy sequence. Let $y=\lim\limits_{n\rightarrow\infty} y_n$ and $z=\lim\limits_{n\rightarrow\infty}z_n$. Then \[y\in span\{T^{-1}x,T^{-2}x,\cdots,T^{-(N+1)}x\}\] and \[z\in span\{T^{-(N+2)}x,T^{-(N+3)}x,\cdots\}\] such that $x'=y+z$. This implies that
\[
\N(x)=span\{T^{-(n+1)}x:0\leq n\leq N\}\overset{\cdot}{+}span\{T^{-(n+1)}x:n\geq N+1\}.
\]
Similarly,
\[
\M(x)=span\{ (-T^{-(n+1)}x)\oplus e_n:0\leq n\leq N\}\overset{\cdot}{+}span\{ (-T^{-(n+1)}x)\oplus e_n:n\geq N+1\}.
\]

Hence $B_x: (-T^{-(n+1)}x)\oplus e_n\rightarrow -T^{-(n+1)}x$ extends to a bounded linear isomorphism from $\M(x)$ onto $\N(x)$.
\end{proof}

\begin{lem}\label{Theorem 4.1}
Suppose an invertible operator $T\in\B(\H)$, $r(T)<1$, $x$ is a cyclic vector of $T$ and $\M(x)\neq \H\oplus \H$. If $b_0\neq 0$, then $P_{\M(x)^\perp}$ is an invertible bounded linear  operator from $0\oplus \H$ onto $\L(x)=span\{b_0,b_1,\cdots\}$, where $b_n=P_{\M(x)^\perp}(0\oplus e_n)$ {\rm(see in Lemma \ref{start})}.
\end{lem}
\begin{proof}
 By Lemma \ref{start},  $\hat{T}_x^*a_n=a_{n+1}$ for $n\geq 0$. Then we have $a_n=\left(\hat{T}_x^*\right)^n a_0$ for $n\geq 0$ and $$\|a_n\|\leq\left\|\left(\hat{T}_x^*\right)^n \right\|  \|a_0\|.$$ By Lemma~\ref{Theorem 1.5}, $\sigma(\hat{T}_x^* )\subset \sigma(T)^{\land}$, so the spectral radius of $\hat{T}_x^*$ is  a positive number $r<1$.  For arbitrary $\epsilon>0$, there exists an $N_1$ such that $\|a_n\|\leq\left|\left(r+\epsilon\right)^n \right| \|a_0\|$ for $n\geq N_1$. This implies that there exists an $N$ sufficiently large such that \[\sum\limits_{n=N+1}^\infty \|(0\oplus e_n)-b_n\|=\sum\limits_{n=N+1}^\infty \|a_n\|<\frac{1}{2}.\] By Theorem 1.3.9 of \cite{AK}, there exists an invertible bounded linear operator $A$ from \[span\{e_{N+1},e_{N+2},\cdots\}\] onto \[span\{b_{N+1},b_{N+2},\cdots\}\] such that $A(e_k)=b_k$ for $k=N+1,N+2,\cdots$. So $A$ is the restriction of $P_{\M(x)^\perp}$ onto $span\{e_{N+1},e_{N+2},\cdots\}$ and $ \{b_{N+1},b_{N+2},\cdots\}$ is a basic sequence.

Suppose $P_{\M(x)^\perp}(0\oplus z)=0$. Write $z=\sum\limits_{n=0}^\infty w_n (0\oplus e_n)$, where $\sum\limits_{n=0}^\infty |w_n|^2<\infty$. Then
\[
0=P_{\M(x)^\perp}(0\oplus z)=P_{\M(x)^\perp}\left(\sum\limits_{n=0}^\infty w_n (0\oplus e_n) \right)=\sum\limits_{n=0}^\infty w_n b_n.
\]
If $z\neq 0$, then $w_k\neq 0$ for some $k$ and $w_j=0$ for all $j<k$. We may assume that $w_k=-1$. Then $b_k=\sum\limits_{n=k+1}^\infty w_n b_n$. Recall that $S_2 b_n=b_{n+1}$ for all $n\geq 0$ (see Lemma~\ref{start}). Thus
\[
b_{N+1}=S_2^{N+1-k}b_k=\sum\limits_{n=k+1}^\infty w_n S_2^{N+1-k}b_n=\sum\limits_{n=k+1}^\infty w_n b_{n+N+1-k}.
\] This contradicts to the fact that $\{b_{N+1},b_{N+2},\cdots\}$ is a basic sequence.
Hence, $z=0$. So
 we have $P_{\M(x)^\perp}$ is an injective bounded linear operator from $0\oplus \H$ into $\L=span\{b_0,b_1,\cdots\}$. We need only to show the map is surjective.

 Suppose $w\in span\{b_0,b_1,\cdots,b_N\}$ and $v\in span\{b_{N+1},b_{N+2},\cdots\}$ satisfy $w=v$. Write $w=\sum\limits_{i=0}^N\alpha_i b_i$ and $v=\sum\limits_{i=N+1}^\infty \alpha_i b_i$, where $\sum\limits_{i=N+1}^\infty |\alpha_i|^2<\infty$. Let
 \[z=\sum\limits_{i=0}^N\alpha_i e_i-\sum\limits_{i=N+1}^\infty \alpha_i e_i.\]
 Then $P_{\M^\perp}(0\oplus z)=w-v=0$. This implies that $z=0$ and therefore, $w=v=0$. So $$span\{b_0,b_1,\cdots,b_N\}\cap span\{b_{N+1},b_{N+2},\cdots\}=\{0\}.$$ Let $x'\in\mathcal{L}$. Then there exists a sequence of vectors $\{x_n\}\in\mathcal{L}$ such that each $x_n$ can be written as a finite linear combinations of $\{b_n\}_{n=1}^\infty$ and $\lim\limits_{n\rightarrow\infty}\|x_n-x'\|=0$. In particular, there is $y_n\in span\{b_0,b_1,\cdots,b_N\}$ and $z_n\in span\{b_{N+1},b_{N+2},\cdots\}$ such that $x_n=y_n+z_n.$

 Let $\pi$ be the quotient map from $\L(x)$ onto $\L(x)/span\{b_{N+1},b_{N+2},\cdots\}$. Then $\pi(y_n)=\pi(x_n)$ is a Cauchy sequence in  $\L(x)/span\{b_{N+1},b_{N+2},\cdots\}$. Clearly, $\pi$ is a surjective map from $span\{b_0,b_1,\cdots,b_N\}$ onto $span\{\pi(b_0),\pi(b_1),\cdots,\pi(b_N)\}$.

  Suppose $\pi(w_0b_0+w_1b_1+\cdots+w_Nb_N)=0$. Then there exists a $v\in span\{b_{N+1},b_{N+2},\cdots\}$ such that $$w_0b_0+w_1b_1+\cdots+w_Nb_N=v.$$ Therefore, $$w_0b_0+w_1b_1+\cdots+w_Nb_N=v=0.$$ So $\pi$ is an injective map from  $span\{b_0,b_1,\cdots,b_N\}$ onto $span\{\pi(b_0),\pi(b_1),\cdots,\pi(b_N)\}$.

  Since $span\{b_0,b_1,\cdots,b_N\}$ is finite dimensional, $\{y_n\}$ is a Cauchy sequence. Thus $\{z_n\}$ is also a Cauchy sequence. Let $y=\lim\limits_{n\rightarrow\infty} y_n$ and $z=\lim\limits_{n\rightarrow\infty}z_n$. Then $y\in span\{b_0,b_1,\cdots,b_N\}$ and $z\in span\{b_{N+1},b_{N+2},\cdots\}$ such that $x'=y+z$. This implies that $$\L(x)=span\{b_0,b_1,\cdots,b_N\}\overset{.}{+}span\{b_{N+1},b_{N+2},\cdots\}$$ and $P_{\M(x)^\perp}$ is onto.
\end{proof}

\begin{lem}\label{Theorem 2.5}
Let $\L(x)=span\{b_n,\,n\geq 0\}$, $\N(x)=span\{T^{-(n+1)}x,\,n\geq 0\}$. Then
\[
(\M(x)\oplus\mathcal{L}(x))\oplus (\N(x)^\perp \oplus 0)=\H\oplus \H,
\]
where $b_n=P_{\M(x)^\perp}(0\oplus e_n)$ {\rm (see in Lemma \ref{start}.)}
\end{lem}
\begin{proof}
We need only to prove $\M(x)\oplus\mathcal{L}(x)=\N(x)\oplus \H$.  First, we show that \[\M(x)\oplus\mathcal{L}(x)\subseteq \N(x)\oplus \H.\] Clearly, $(-T^{-(n+1)}x)\oplus e_n\in \N(x)\oplus \H$ for all $n\geq 0$. Thus $\M(x)\subset \N(x)\oplus \H$. Note that $b_n=(0\oplus e_n)-a_n\in \N(x)\oplus \H$. Thus $\L(x)\subset \N(x)\oplus \H$ and $\M(x)\oplus\mathcal{L}(x)\subseteq \N(x)\oplus\H$. Second, we show that $\N(x)\oplus\H\subseteq \M(x)\oplus\mathcal{L}(x)$. Note that for $n\geq 0$, \[0\oplus e_n=a_n+b_n\in \M(x)\oplus\mathcal{L}(x).\] Also for $n\geq 0$, $(-T^{-(n+1)}x)\oplus 0=(-T^{-(n+1)}x)\oplus e_n-(0\oplus e_n)\in \M(x)\oplus\mathcal{L}(x)$. Thus \[\N(x)\oplus\H\subseteq \M(x)\oplus\mathcal{L}(x).\]
\end{proof}

\begin{cor}\label{orthogonal decomposition}
$\L(x)\subset \M(x)^\perp$ and $\M(x)^\perp\ominus \L(x)=\N(x)^\perp\oplus 0$.
\end{cor}

By equation (5.2), Lemma~\ref{Theorem 2.5} and Corollary~\ref{orthogonal decomposition}, we have the following
\[
S_x=\left(\begin{matrix}
\hat{S}_x&\quad S_{1,2}\\
0&\quad S_2
\end{matrix}\right)\begin{matrix}
\M(x)\\
\M(x)^\perp
\end{matrix}=\left(\begin{matrix}
\hat{S}_x&\quad \bar{S}_{12}&\quad\bar{S}_{13}\\
0&\quad S_2|_{\L(x)}&\quad G_{12}\\
0&\quad 0&\quad P_{\N(x)^\perp\oplus 0}S_2 P_{\N(x)^\perp\oplus 0}\end{matrix}
\right)\begin{matrix} \M(x)\\\L(x)\\\N(x)^\perp\oplus 0\end{matrix}\]
\[
=\left(\begin{matrix}
T^{-1}&\quad 0\\
0&\quad S_1
\end{matrix}\right)\begin{matrix}
\H\\
\H
\end{matrix}=\left(\begin{matrix}
\bar{T}_1(x)&\quad \bar{T}_{12}(x)&\quad 0\\
0&\quad \bar{T}_2(x)&\quad 0\\
0&\quad 0&\quad S_1
\end{matrix}
\right)\begin{matrix}
\N(x)\oplus 0\\\N(x)^\perp\oplus 0\\0\oplus\H
\end{matrix}.
\]
In particular,
\begin{equation}
T^{-1}=\begin{pmatrix}
\bar{T}_1(x)&\quad\bar{T}_{12}(x)\\
0&\quad \bar{T}_2(x)
\end{pmatrix}
\begin{matrix}
\N(x)\\
\N(x)^\perp
\end{matrix},
\end{equation}
where $\bar{T}_2(x)=P_{\N(x)^\perp\oplus 0}S_2 P_{\N(x)^\perp\oplus 0}$.

Recall that \[T_x=\left(\begin{matrix}
T&\quad x\otimes e_0\\
0&\quad S_1^*
\end{matrix}\right)\begin{matrix}
\H\\
\H
\end{matrix}=\left(\begin{matrix}
\hat{T}_x&\quad T_{1,2}\\
0&\quad T_2
\end{matrix}\right)\begin{matrix}
\M(x)\\
\M(x)^\perp
\end{matrix}\] and  \[S_x=\left(\begin{matrix}
T^{-1}&\quad 0\\
0&\quad S_1
\end{matrix}\right)\begin{matrix}
\H\\
\H
\end{matrix}=\left(\begin{matrix}
\hat{S}_x&\quad S_{1,2}\\
0&\quad S_2
\end{matrix}\right)\begin{matrix}
\M(x)\\
\M(x)^\perp
\end{matrix}.\]

\begin{lem}\label{Theorem 3.1}
$\hat{S}_x^*\in B_1(s\DDD)$, where $s=r(T)^{-1}$.
\end{lem}
\begin{proof}
Since $T_xS_x=I$, $S^*_xT^*_x=I$. We have
\[
\left(\begin{matrix}
\hat{S}^*_x&\quad 0\\
S^*_{1,2}&\quad S^*_2
\end{matrix}\right)\left(\begin{matrix}
\hat{T}^*_x&\quad 0\\
T^*_{1,2}&\quad T^*_2
\end{matrix}\right)=I.
\]
This implies that $\hat{S}^*_x\hat{T}^*_x=I$. By Lemma~\ref{Theorem 1.5} and Theorem 16.12 of~\cite{MU}, \[ind\hat{S}^*_x=-ind\hat{T}^*_x=ind\hat{T}_x=1.\] For $\lambda\in \CCC$, we have
\[
\left(\hat{S}^*_x-\lambda\right)\hat{T}^*_x=I-\lambda\hat{T}^*_x=\lambda\left(\lambda^{-1}-\hat{T}^*_x\right).
\]
By Lemma~\ref{Theorem 1.5}, $\sigma\left(\hat{T}^*_x\right)\subseteq \sigma(T)^\land\subseteq r(T)\bar{\DDD}$. Hence, for $|\lambda|<s$, $|\lambda^{-1}|>r(T)$ and
$I-\lambda\hat{T}^*_x$ is invertible. This implies that $\hat{S}^*_x-\lambda$ is surjective for $|\lambda|<s$. Therefore, $dimker(\hat{S}_x-\lambda)=0$ for $|\lambda|<s$. By the continuity of index, $ind(\hat{S}^*_x-\lambda)=1$ for all $|\lambda|<s$. So $dimker(\hat{S}^*_x-\lambda)=1$ for all $|\lambda|<s$.

By Lemma~\ref{start}, $\hat{S}^*_x a_0=0$.
Let
\[
e(w)=\sum_{n=0}^\infty w^n \left(\hat{T}^*_x\right)^n(a_0).
\]
Since  $r(\hat{T}^*_x)\leq r(T)$, $e(w)$ is well-defined for $|w|<s$. We have
\[
\hat{S}^*_x(e(w))=\hat{S}^*_x\left(\sum_{n=0}^\infty w^n \left(\hat{T}^*_x\right)^n(a_0)\right)=w\left(\sum_{n=0}^\infty w^n \left(\hat{T}^*_x\right)^n(a_0)\right)=we(w).
\]
Suppose
\[
\sum_{n=0}^\infty w_0^n \left(\hat{T}^*_x\right)^n(a_0)=0
\]
for some $|w_0|<s$. Then
\[
0=\hat{T}^*_x\left(\sum_{n=0}^\infty w_0^n \left(\hat{T}^*_x\right)^n(a_0)\right)=\sum_{n=0}^\infty w_0^n \left(\hat{T}^*_x\right)^{n+1}(a_0).
\]
Hence
\[
0=\sum_{n=0}^\infty w_0^{n+1} \left(\hat{T}^*_x\right)^{n+1}(a_0)=\sum_{n=1}^\infty w_0^n \left(\hat{T}^*_x\right)^{n}(a_0).
\]
Therefore,
\[
a_0=0.
\]
This is a contradiction.
By Lemma~\ref{Lemma 1.4},
\[
span\{e_n(w):\,{|w|<s}\}=span\left\{ker \left(\hat{S}^*_x\right)^n:\,n\geq 1\right\}=span\{a_0,a_1,\cdots,\}=\M,
\]
we conclude that $\hat{S}^*_x\in B_1(s\DDD)$.
\end{proof}

\begin{lem}\label{Theorem 3.2}
Let $\Sigma$ be the connected component of $\DDD\setminus \sigma(T)$ which contains $\{w\in \DDD: r(T)<|w|<1\}$ and let $\Sigma^{-1}=\{w^{-1}:\,w\in \Sigma\}$.
Then $S_2\in B_1(\Sigma^{-1})$.
\end{lem}
\begin{proof}
Since $T_xS_x=I$, we have
\[
\left(
\begin{matrix}
\hat{T}_x&\quad T_{12}\\
0&\quad T_2
\end{matrix}
\right)\left(
\begin{matrix}
\hat{S}_x&\quad S_{12}\\
0&\quad S_2
\end{matrix}
\right)=I.
\]
This implies that $T_2S_2=I$. By Lemma~\ref{Theorem 1.5}, for $w\in \Sigma^{-1}$, we have $w^{-1}\in \Sigma$ and
\[
T_2(S_2-w)=I-w T_2=w(w^{-1}-T_2)
\]
is surjective. By Lemma~\ref{Theorem 1.5}, $T_2$ is invertible, so $S_2-w$ is surjective  for $w\in \Sigma^{-1}$. Also note that $\forall w\in \Sigma^{-1}$, $w^{-1}\in \Sigma$,
\[
(S_2^*-\bar{w})T_2^*=I-\bar{w}T_2^*=\bar{w}(w^{-1}-T_2)^*.
\]
So $Ran(S_2^*-\bar{w})=Ran(w^{-1}-T_2)^*$. This implies that
\[
ker(S_2-w)=(Ran(S_2^*-\bar{w}))^\perp=(Ran(w^{-1}-T_2)^*)^\perp=ker(w^{-1}-T_2)
\]
is dimensional one and by Lemma~\ref{Theorem 1.5},
\[
span\{ker(S_2-w):\,{w\in \Sigma^{-1}}\}=span\{ker(w^{-1}-T_2):\,{w^{-1}\in \Sigma}\}=\M(x)^\perp.
\]
So $S_2\in B_1(\Sigma^{-1})$.
\end{proof}

\begin{lem}\label{Theorem 6.2}
Let $S_2=\left(\begin{matrix}
S_2|_{\L(x)}&G_{12}\\
0& P_{\N(x)^\perp\oplus 0}S_2 P_{\N(x)^\perp\oplus 0}
\end{matrix}\right)\begin{matrix} \L(x)\\ \N(x)^\perp\oplus 0\end{matrix}
.$ Then $S_2|_{\L(x)}$ is similar to the unilateral shift operator and
 $P_{\N(x)^\perp\oplus 0}S_2 P_{\N(x)^\perp\oplus 0}\in B_1(\Sigma^{-1})$.
\end{lem}
\begin{proof}
By Lemma~\ref{Theorem 4.1},
\[
(P_{\M(x)^\perp})^{-1}S_2|_{\L(x)} P_{\M(x)^\perp}(0\oplus e_n)=(P_{\M(x)^\perp})^{-1}S_2|_{\L(x)} b_n=(P_{\M(x)^\perp})^{-1}b_{n+1}=0\oplus e_{n+1},
\]
for all $n\geq 0$. So $S_2|_{\L(x)}$ is similar to the unilateral shift operator.

 By Lemma~\ref{Theorem 3.2}, $S_2-w$ is surjective for $w\in \Sigma^{-1}$. Therefore, $\left(P_{\N(x)^\perp\oplus 0}S_2 P_{\N(x)^\perp\oplus 0}-w\right)$ is also surjective.
For $w\in \Sigma^{-1}$, there exists a nonzero vector $f(w)=f_1(w)\oplus f_2(w)$ such that
\[
\left(\begin{matrix}
(S_2-w)|_{\L(x)}&G_{12}\\
0& P_{\N(x)^\perp\oplus 0}S_2 P_{\N(x)^\perp\oplus 0}
\end{matrix}
\right)\left(\begin{matrix}
f_1(w)\\
f_2(w)\end{matrix}\right)=\left(\begin{matrix}
0\\
0\end{matrix}\right).
\]
Suppose $f_2(w)=0$. Then $f_1(w)\neq 0$ and
 $(S_2|_{\L(x)}-w)f_1(w)=0$. So $S_2|_{\L(x)}-w$ is not invertible. This is a contradiction. Thus $f_2(w)\neq 0$ and $ P_{\M(x)^\perp\ominus \L(x)}(S_2-w) P_{\M(x)^\perp\ominus \L(x)}(f_2(w))=0$. Suppose $\eta_1,\eta_2\in ker\left(P_{\N(x)^\perp\oplus 0}S_2 P_{\N(x)^\perp\oplus 0}-w\right)$. Then there exist $\xi_1,\xi_2\in \L(x)$ such that $$(S_2-w)\left(\begin{matrix} \xi_i\\\eta_i\end{matrix}\right)=\left(\begin{matrix}0\\0\end{matrix}\right).$$ So $\xi_1\oplus \eta_1,\xi_2\oplus\eta_2\in ker(S_2-w)$. Since $dimker(S_2-w)=1$, $\xi_1\oplus \eta_1,\xi_2\oplus\eta_2$ are linearly dependent. Therefore, $\eta_1,\eta_2$ are linearly dependent. So \[dimker\left(P_{\N(x)^\perp\oplus 0}S_2 P_{\N(x)^\perp\oplus 0}-w\right)=1.\] Since $span\left\{f_1(w)\oplus f_2(w):\,{w\in \Omega_1^{-1}}\right\}=\M(x)^\perp$, $span\{f_2(w):\,{w\in \Omega_1^{-1}}\}=\M(x)^\perp\ominus \L(x)$. This implies that  $$P_{\N(x)^\perp\oplus 0}S_2 P_{\N(x)^\perp\oplus 0}\in B_1(\Sigma^{-1}).$$
\end{proof}

\begin{lem}\label{Theorem 6.3}
Write
\[
T^{-1}=\left(
\begin{matrix}
\bar{T}_1(x)&\quad \bar{T}_{12}(x)\\
0&\quad \bar{T}_2(x)
\end{matrix}
\right)\begin{matrix}
\N(x)\\
\N(x)^\perp
\end{matrix}.
\]
Then $\bar{T}_2(x)=P_{\M(x)^\perp\ominus \L(x)}S_2 P_{\M(x)^\perp\ominus \L(x)}\in B_1(\Sigma^{-1})$ and $\bar{T}_1^*(x)\in B_1(\Omega_0^*)$, where $\Omega_0$ is the connected component of $\CCC\setminus \sigma(T^{-1})$ containing 0.
\end{lem}
\begin{proof}
By Lemma~\ref{Theorem 5.5},
\[
B_x\hat{S}_x B_x^{-1}(T^{-n}x)=B_x\hat{S}_x(T^{-n}x\oplus (-e_{n-1}))=B_x(T^{-(n+1)}x\oplus (-e_{n}))=T^{-(n+1)}x.
\]
This implies that $\bar{T}_1(x)=B_x\hat{S}_x B_x^{-1}$. By Lemma~\ref{Theorem 3.1}, we have $\bar{T}_1^*(x)\in B_1(s\DDD)$, where $s=r(T)^{-1}$. For $\lambda\in \Omega_0$, $\lambda\in \rho(T^{-1})$. Hence, $\lambda\in \rho_{s-F}(\bar{T}_1(x))$. Since $ind \bar{T}_1(x)=0$. The continuity of index implies that $ind \left(\bar{T}_1(x)-\lambda\right)=-1$ for all $\lambda\in \Omega_0$. Since $\bar{T}_1^*\in B_1(s\DDD)$, Lemma~\ref{3.4} implies that $ker\left(\bar{T}_1(x)-\lambda\right)=\{0\}$ and $dim ker\left(\bar{T}_1^*(x)-\bar{\lambda}\right)=1$ for all $\lambda\in \Omega_0$. Thus  $\bar{T}_1^*(x)\in B_1(\Omega_0^*)$. By Lemma~\ref{Theorem 6.2},  $\bar{T}_2(x)=P_{\M(x)^\perp\ominus \L(x)}S_2 P_{\M(x)^\perp\ominus \L(x)}=P_{\N(x)^\perp\oplus 0}S_2 P_{\N(x)^\perp\oplus 0}\in B_1(\Sigma^{-1})$. This proves the lemma.
\end{proof}

Recall that $\Phi_0$ is the connected component of $\CCC\setminus \sigma(T)$ containing 0 and $\Omega_0$ is the connected component of $\CCC\setminus \sigma(T^{-1})$ containing 0. Note that
\[
\Phi_0=\left\{\frac{1}{\lambda}:\, \lambda\in\CCC\setminus \sigma(T^{-1})^\land\cup\{\infty\}\right\}.
\]
By Lemma~\ref{Theorem 5.5}, the map $B_x: (-T^{-(n+1)}x)\oplus e_n\rightarrow -T^{-(n+1)}x$, $n\geq 0$, extends to a bounded linear isomorphism from $\M(x)=span\{(-T^{-(n+1)}x)\oplus e_n:\,n\geq 0\}$ onto $\N(x)=span\{T^{-(n+1)}x:\, n\geq 0\}$.

\begin{prop}\label{P:main}
Suppose $T$ is transitive, $r(T)<1$, $x$ is a unit noncyclic vector of $T^{-1}$. Then
\[
T=\begin{pmatrix}
H_1(x)&\quad H_{12}(x)\\
H_{21}(x)&\quad H_2(x)
\end{pmatrix}\begin{matrix}
\N(x)\\
\N^\perp(x)
\end{matrix},\quad T^{-1}=\begin{pmatrix}
\bar{T}_1(x)&\quad \bar{T}_{12}(x)\\
0&\quad \bar{T}_2(x)
\end{pmatrix}\begin{matrix}
\N(x)\\
\N^\perp(x)
\end{matrix}
\]
satisfy
\begin{enumerate}
\item $\bar{T}_1(x)=B_x\hat{S}_xB_x^{-1}$, $\bar{T}_1^*(x)\in B_1(\Omega_0^*)$, and $T^{-1}x\in\C(\bar{T}_1(x))$;
\item $\bar{T}_2(x)\in B_1(\Omega_0)$, $\bar{T}_2(x)H_2(x)=I_{\N(x)^\perp}$, and $\C(H_2(x))\neq \emptyset$;
\item $\bar{H}_1(x)\triangleq B_x\hat{T}_xB_x^{-1}\in B_1(\Phi_0)$, $\bar{H}_1(x)\bar{T}_1(x)=I_{\N(x)}$, and $\C(\bar{H}_1^*(x))\neq \emptyset$;
\item $H_{21}(x)$ is a rank one operator.
\end{enumerate}
\end{prop}
\begin{proof}
(1) $\bar{T}_1^*(x)\in B_1(\Omega_0^*)$ and $\bar{T}_1(x)=B_x\hat{S}_xB_x^{-1}$ follow from  Lemma~\ref{Theorem 6.3} and its proof. Note that
\[
\N(x)=span\{T^{-k}x:\,k\geq 1\}=span\{\bar{T}_1(x)^k T^{-1}x: k\geq 0\}.
\]
Therefore, $T^{-1}x\in\C(\bar{T}_1(x))$.

(2)By Lemma~\ref{Theorem 6.2} and Lemma~\ref{Theorem 6.3}, $\overline{T}_2(x)\in B_1(\Phi)$, where
\[
\Phi=\left\{w,\quad 1<|w|<\frac{1}{r(T)}\right\}.
\]
Note that $\Phi\subseteq \Omega_0$. Since $\Omega_0\subset \rho(T^{-1})$ and $\overline{T}_1^*(x)\in B_1(\Omega_0)$, $\Omega_0\subseteq \rho_F(\overline{T}_2(x))$. By the continuity of index, we deduce that $\overline{T}_2(x)\in B_1(\Omega_0)$. Since $\bar{T}_2(x)H_2(x)=I_{\N(x)^\perp}$, by lemma~\ref{L:cyclic vector},  $\C(H_2(x))\neq \emptyset$.

(3)
$\bar{H}_1(x)\triangleq B_x\hat{T}_xB_x^{-1}\in B_1(\Phi_0)$ follows from Lemma~\ref{Theorem 1.5}.
\[
\bar{H}_1(x)\left(T^{-1}x\right)=B_x\hat{T}_x B_x^{-1}\left(T^{-1}x\right)=B_x\hat{T}_x\left(T^{-1}x\oplus(-e_0)\right)=0.
\]
Also
\[
\bar{H}_1(x)\bar{T}_1(x)=B_x\hat{T}_x B_x^{-1}B_x\hat{S}_x B_x^{-1}=I_{\N(x)}.
\]
Then $\bar{T}^*_1(x)\bar{H}^*_1(x)=I_{\N(x)}$. By Lemma~\ref{L:cyclic vector}, $\C(\bar{H}_1^*(x))\neq \emptyset$.

(4) Since $H_{21}(x)\bar{T}_1(x)=0$, $\bar{T}_1^*(x)H_{21}^*(x)=0$. So $H_{21}(x)$ is a rank one operator.
\end{proof}

\section{The main theorems}

In this section, it is helpful to keep the following special case in mind: $\sigma(T^{-1})$ is the union of two circles, one is with center $(0,0)$ and radius $2$ and the other is with center $\left(4,0\right)$ and radius $2$. In this special case
\[
\sigma(T^{-1})^\land=\{\lambda:\, |\lambda|\leq 2\}\cup \{\lambda:\, |\lambda-4|\leq 2\}
\]
and
\[
int \sigma(T^{-1})^\land=\{\lambda:\, |\lambda|< 2\}\cup \{\lambda:\, |\lambda-4|< 2\}.
\]

\begin{thm}\label{T:main theorem 1}
Suppose $T\in \B(\H)$ is an invertible operator, $x$ is a nonzero noncyclic vector of $T^{-1}$, and $\N(x)=span\{T^{-k}x:\, k\geq 1\}.$ Let $\overline{T}_1(x)=P_{\N(x)} T^{-1} P_{\N(x)}$. If $\sigma(\overline{T}_1(x))^\land\cap \rho_F(\overline{T}_1(x))$ has a connected component which does not contain zero point, then $T$ is intransitive.
\end{thm}
\begin{proof}
Suppose $T$ is transitive. By (1) of Proposition~\ref{P:main}, $\overline{T}_1^*(x)\in B_1(\Omega_0^*)$ and $0\in \Omega_0$. By Lemma~\ref{7.2}, let $\overline{T}_1^*(x)\in B_1(L_0^*)$ such that $L_0^*\supseteq \Omega_0^*$ is maximal. Suppose $\Omega$ is a connected component of $\sigma(\overline{T}_1(x))^\land\cap \rho_F(\overline{T}_1(x))$ such that $0\notin\Omega$. Then $\Omega\cap L_0=\emptyset$. By Proposition~\ref{P:main}, $T^{-1}x\in \mathcal{C}(\overline{T}_1(x))$. By Proposition~\ref{4.3}, $\Omega\subseteq \rho(\overline{T}_1(x))$. Note that $\partial \Omega\subseteq \sigma_e(\overline{T}_1(x))$. Since
$\overline{H}_1(x)\overline{T}_1(x)=I$ and $\overline{T}_1(x)^*\in B_1(\Omega_0^*)$, $\pi(\overline{H}_1(x))=\pi(\overline{T}_1(x))^{-1}$. Then $(\partial \Omega)^{-1}\subseteq \sigma_e(\overline{H}_1(x))$ and $\Omega^{-1}\subseteq \rho_F(\overline{H}_1(x))$. By (3) of Proposition~\ref{P:main}, we have $\overline{H}_1(x)\in B_1(\Phi_0)$ and $0\in\Phi_0$. By Lemma~\ref{7.2}, let $\overline{H}_1(x)\in B_1(L_1)$ such that $L_1\supseteq \Phi_0$ is maximal. Then $\Omega^{-1}\cap L_1=\emptyset$. By (3) of Proposition~\ref{P:main}, $\C(\overline{H}_1^*(x))\neq \emptyset$. By Proposition~\ref{4.3}, $\Omega^{-1}\subseteq \rho(\overline{H}_1(x))$. Note that $\overline{H}_1(x)\overline{T}_1(x)=I_{\N(x)}$. For $\lambda\in \Omega$, $\frac{1}{\lambda}\in \Omega^{-1}$ and
\[
\lambda\left(\overline{H}_1(x)-\frac{1}{\lambda}\right)=\lambda\left(\overline{H}_1(x)-\frac{1}{\lambda} \overline{H}_1(x)\overline{T}_1(x)\right)=\overline{H}_1(x)(\lambda-\overline{T}_1(x)).
\]
Note that both $\overline{H}_1(x)-\frac{1}{\lambda}$ and $\lambda-\overline{T}_1(x)$ are invertible. Thus $\overline{H}_1(x)$ is invertible. On the other hand $\overline{H}_1(x)\in B_1(\Phi_0)$ and $0\in \Phi_0$. So $\overline{H}_1(x)$ is not invertible. This is a contradiction.
\end{proof}

\begin{lem}\label{5.3}
 Suppose $T$ is transitive, $x$ is a nonzero noncyclic vector of $T^{-1}$, and $\mathcal{N}(x)=span\{T^{-k}x:\, k\geq 1\}$.  Let $\mathcal{U}_0$ denote the connected component of int$(\sigma(T^{-1})^{\land})$ containing $0$ and let $\Omega$ be a connected open subset of $\rho(T^{-1})$. If $\Omega\cap \mathcal{U}_0=\emptyset$, then $\Omega\subseteq\rho(\overline{T}_1(x))\cap \rho(\overline{T}_2(x))$.
\end{lem}
\begin{proof}
Recall that
\[
 T^{-1}=\begin{pmatrix}
 \overline{T}_1(x) &\quad \overline{T}_{12}(x) \\
 0 &\quad \overline{T}_{2}(x)\\
 \end{pmatrix}\begin{matrix}\mathcal{N}(x)&\\
\mathcal{N}(x)^{\bot}&\end{matrix}.
\]
By Proposition~\ref{P:main}, $\bar{T}_1^*(x)\in B_1(\Omega_0^*)$, $\bar{T}_2(x)\in B_1(\Omega_0)$ and $0\in\Omega_0$. By Lemma~\ref{3.4}, $$\sigma_p(\bar{T}_1(x))=\sigma_p(\bar{T}_2^*(x))=\emptyset.$$
Suppose $\lambda\in \rho(T^{-1})$. Write
\[
\left(T^{-1}-\lambda\right)^{-1}=\begin{pmatrix}
 A &\quad B \\
 C &\quad D\\
 \end{pmatrix}\begin{matrix}\mathcal{N}(x)&\\
\mathcal{N}(x)^{\bot}&\end{matrix}.
\]
Then we have
\[
\begin{pmatrix}
 A &\quad B \\
 C &\quad D\\
 \end{pmatrix}\begin{pmatrix}
 \overline{T}_1(x)-\lambda  &\quad \overline{T}_{12}(x) \\
 0 &\quad \overline{T}_{2}(x)-\lambda\\
 \end{pmatrix}=\begin{pmatrix}
 \overline{T}_1(x)-\lambda &\quad \overline{T}_{12}(x) \\
 0 &\quad \overline{T}_{2}(x)-\lambda\\
 \end{pmatrix}\begin{pmatrix}
 A &\quad B \\
 C &\quad D\\
 \end{pmatrix}=\begin{pmatrix}
 I_{\N(x)} &0 \\
 0 &I_{\N(x)^\perp}\\
 \end{pmatrix}.
\]
This implies that $(\overline{T}_{2}(x)-\lambda)D=I_{\N(x)^\perp}$ and $A(\overline{T}_1(x)-\lambda)=I_{\N(x)}$ or equivalently $$(\overline{T}_1(x)-\lambda)^*A^*=I_{\N(x)}.$$ Therefore, $Ran(\overline{T}_{2}(x)-\lambda)$ and $Ran(\overline{T}_{1}(x)-\lambda)^*$ are closed. Thus $Ran(\overline{T}_{1}(x)-\lambda)$ is also closed.  Since $$\sigma_p(\bar{T}_1(x))=\sigma_p(\bar{T}_2^*(x))=\emptyset,$$
$\lambda\in \rho_{s-F}(\overline{T}_{1}(x))\cap \rho_{s-F}(\overline{T}_{2}(x))$. We have
\[
\rho(T^{-1})\subseteq \rho_{s-F}(\overline{T}_{1}(x))\cap \rho_{s-F}(\overline{T}_{2}(x)).
\]
Since $\Omega\subseteq \rho(T^{-1})$, $\Omega\subseteq \rho_{s-F}(\overline{T}_{1}(x))\cap \rho_{s-F}(\overline{T}_{2}(x))$.
 Note that $\overline{T}_1^{*}(x)\in B_1(\Omega_{0}^*)$ and $0\in \Omega_0$.  By Lemma~\ref{7.2}, $\overline{T}_1^*(x)\in B_1(L_0^*)$ such that $L_0^*\supseteq \Omega_0^*$ is maximal. Note that $\mathcal{N}(x)\in Lat(T^{-1}).$ By Lemma~\ref{L:invariant subspace},  we have $\sigma(\overline{T}_1(x))\subseteq\sigma(T^{-1})^{\land}$ and thus  $\sigma(\overline{T}_1(x))^\land\subseteq \sigma(T^{-1})^\land$. Then the connected component of int$\sigma(\overline{T}_1(x))^\land$ containing zero point is a subset of $\U_0$. Since $L_0\subseteq \sigma(\overline{T}_1(x))$ is a connected open set containing zero point, $L_0\subseteq \U_0$. By the assumption of Lemma~\ref{5.3}, $\Omega\cap \U_0=\emptyset$. Thus $\Omega\cap L_0=\emptyset$. By Proposition~\ref{P:main}, $T^{-1}x\in \mathcal{C}(\overline{T}_1(x))$.
By Proposition \ref{4.3},
$\Omega\subset \rho(\overline{T}_1(x))$.

 Notice that $\Omega\subseteq \rho(T^{-1})\cap \rho(\overline{T}_1(x)).$   Therefore, $$0=\mbox{ind}(T^{-1}-\lambda)=\mbox{ind}(\overline{T}_1(x)-\lambda)+\mbox{ind}(\overline{T}_2(x)-\lambda)=0+\mbox{ind}(\overline{T}_2(x)-\lambda), \forall \lambda\in \Omega.$$
 Thus, we have $\mbox{ind}(\overline{T}_2(x)-\lambda)=0, \forall \lambda\in \Omega.$  By (2) of Proposition~\ref{P:main}, $\overline{T}_2(x)\in B_1(\Omega_0)$.
 By Lemma~\ref{3.4}, $\sigma_p(\overline{T}_2^*(x))=\emptyset$. Thus $dimker(\overline{T}_2(x)-\lambda)=0$, $\forall \lambda\in\Omega$, and $\Omega\subseteq\rho(\overline{T}_2(x))$.
\end{proof}

\begin{lem}\label{L:4 conditions}
Let $\sigma_e(T^{-1})=\sigma(T^{-1})\subseteq \sigma(\overline{T}_2(x))$ and let
\[
R_0=\left\{\frac{1}{\lambda}:\, \lambda\in\CCC\setminus \sigma(\overline{T}_1(x))^\land\cup\{\infty\}\right\},
\]
\[
\Phi_0=\left\{\frac{1}{\lambda}:\, \lambda\in\CCC\setminus \sigma(T^{-1})^\land\cup\{\infty\}\right\}.
\]
Then the following statements hold:
\begin{enumerate}
\item  $
\Phi_0=\left\{\frac{1}{\lambda}:\, \lambda\in\CCC\setminus \sigma(\overline{T}_2(x))^\land\cup\{\infty\}\right\};$
\item $\Phi_0\subseteq R_0$;
\item $\overline{H}_1(x)\in B_1(R_0)$, $R_0$ is maximal and $R_0^*$ is the connected component of $\rho_F^{-1}(\overline{H}_1^*(x_1))$ containing zero point.
\end{enumerate}
\end{lem}
\begin{proof}
(1). By the assumption of Lemma~\ref{L:4 conditions} and Lemma~\ref{L:invariant subspace},
\[
\sigma(T^{-1})\subseteq \sigma(\overline{T}_2(x))\subseteq \sigma(T^{-1})^\land.
\]
Then $\sigma(\overline{T}_2(x))^\land=\sigma(T^{-1})^\land$ and (1) holds.

(2). Since $\N(x)\in Lat (T^{-1})$, $\sigma(\overline{T}_1(x))\subseteq \sigma(T^{-1})^\land$. Therefore, $\sigma(\overline{T}_1(x))^\land\subseteq \sigma(T^{-1})^\land$. Then $\Phi_0\subseteq R_0$.

(3). By (3) of Proposition~\ref{P:main}, $\overline{H}_1(x)\in B_1(\Phi_0)$. Since $\overline{H}_1(x)\overline{T}_1(x)=I_{\N(x)}$, $$\pi(\overline{H}_1(x))=\pi(\overline{T}_1(x))^{-1}.$$   Since $$\CCC\setminus \sigma(\overline{T}_1(x))^\land\subset \rho(\overline{T}_1(x))\subset \rho(\pi(\overline{T}_1(x))),$$ $R_0\subseteq \rho(\pi(\overline{H}_1(x)))$. By (2), Lemma~\ref{3.4}, and continuity of index, $\overline{H}_1(x)\in B_1(R_0)$. Since $$\partial \sigma(\overline{T}_1(x)^\land)\subseteq \sigma_e(\overline{T}_1(x)),$$ we have $$\partial R_0\subseteq \sigma_e(\overline{H}_1(x)).$$ Thus
 $R_0^*$ is the connected component of $\rho_{F}^{-1}(\overline{H}_1^*(x))$ containing zero point.

\end{proof}

\begin{thm}\label{T:main theorem 2}
Let $T\in \B(\H)$ be an invertible operator. Suppose $x$ is a nonzero noncyclic vector of $T^{-1}$ and $\N(x)=span\{T^{-k}x:\,k\geq 1\}$. Let $\overline{T}_2(x)=P_{\N^\perp(x)}T^{-1}P_{\N^\perp(x)}$. If $\sigma(T^{-1})\subseteq \sigma(\overline{T}_2(x))$,  and there exists a bounded open set $\Omega$ which is a connected component of $\rho(T^{-1})$ such that $\Omega\cap \U_0=\emptyset$, where $\U_0$ is the connected component of $int(\sigma(T^{-1})^\land)$ containing zero point, then $T$ is intransitive.
\end{thm}
\begin{proof}

Suppose $T$ is transitive. Then $\sigma(T^{-1})=\sigma_e(T^{-1})$ and $\rho(T^{-1})=\rho_F(T^{-1})$.
 By Lemma~\ref{5.3}, since $\Omega\cap\U_0=\emptyset$,

(1.1) $\Omega\subseteq \rho(\overline{T}_1(x))\cap \rho(\overline{T}_2(x))$.

By Proposition~\ref{P:main}, $\overline{T}_1^*(x)\in B_1(\Omega_0^*)$ and $0\in \Omega_0$. By Lemma~\ref{7.2}, let $\overline{T}_1^*(x)\in B_1(L_0^*)$, and $L_0^*\supseteq \Omega_0^*$ is maximal. By Proposition~\ref{P:main}, $T^{-1}x\in\C(\overline{T}_1(x))$. By Proposition~\ref{4.3}, $L_0$ is the connected component of $\rho_F^{-1}(\overline{T}_1(x))$ containing zero point.

{\bf Claim 1:} $L_0\subseteq \U_0$. By Lemma~\ref{L:invariant subspace}, $\sigma(\overline{T}_1(x))\subseteq \sigma(T^{-1})^\land$. Thus
\[
L_0\subseteq int\sigma(\overline{T}_1(x))\subseteq int \sigma(T^{-1})^\land.
\]
Since $0\in L_0$, and $\U_0$ is the connected component of  $int(\sigma(T^{-1})^\land$ containing zero point, it follows that $L_0\subseteq \U_0$ and Claim 1 holds.

By the assumption of Theorem~\ref{T:main theorem 2},  $\Omega$ is a bounded connected component of $\rho(T^{-1})$. Since $\sigma(T^{-1})\subseteq\sigma(\overline{T}_2(x))$, $\rho(\overline{T}_2(x))\subseteq\rho(T^{-1})$.
 By (1.1), $\Omega\subseteq \rho(\overline{T}_2(x))\subseteq\rho(T^{-1})$ is a bounded connected component of $\rho(\overline{T}_2(x))$
 and $\partial \Omega\subseteq \sigma_e(\overline{T}_2(x))$. By Proposition~\ref{P:main}, $\overline{T}_2(x)\in B_1(\Omega_0)$ and $0\in \Omega_0$.
 Since $\overline{T}_2(x)H_2(x)=I_{\N(x)^\perp}$, $$\pi(\overline{T}_2(x))=\pi(H_2(x))^{-1}.$$  Thus $$\Omega^{-1}=\left\{\frac{1}{\lambda}:\,\lambda\in \Omega\right\}\subseteq \rho_F(H_2(x))$$ and $\partial \Omega^{-1}\subseteq \sigma_e(H_2(x))$. Since $\overline{T}_2(x)\in B_1(\Omega_0)$ and $0\in \Omega_0$, $ind(\overline{T}_2(x))=1$. Since $\overline{T}_2(x)H_2(x)=I$, $ind H_2(x)=-1$. Note that $0\notin \Omega^{-1}$ and $\C(H_2(x))\neq \emptyset$ by Proposition~\ref{P:main}. We have $\Omega^{-1}\cap \rho_F^{-1}(H_2(x))=\emptyset$. By Proposition~\ref{4.4},
\[
dimker(H_2(x)-\lambda)^*\leq 1,\quad\forall \lambda\in\CCC.
\]
Thus $ind(H_2(x)-\lambda)\geq -1$ for all $\lambda\in \CCC$. Since $\Omega^{-1}\cap \rho_F^{-1}(H_2(x))=\emptyset$,

(1.2) $ind(H_2(x)-\lambda)\geq 0,\quad\forall \lambda\in \Omega^{-1}$.

{\bf Claim 2.}\,  $\Omega^{-1}\subseteq R_0=\left\{\frac{1}{\lambda}:\, \lambda\in\CCC\setminus \sigma(\overline{T}_1(x))^\land\cup\{\infty\}\right\}$.

Note that either $\Omega^{-1}\cap R_0=\emptyset$ or $\Omega^{-1}\subseteq R_0$. In fact, since $\overline{H}_1(x)\overline{T}_1(x)=I_{\N(x)}$, $$\pi(\overline{H}_1(x))=\pi(\overline{T}_1(x))^{-1}.$$ By (1.1), $\Omega\subseteq \rho(\overline{T}_1(x))$. Thus $\Omega^{-1}\subseteq \rho_F(\overline{H}_1(x))$. By (3) of Lemma~\ref{L:4 conditions}, $\overline{H}_1(x)\in B_1(R_0)$ and $R_0$ is maximal. By Proposition~\ref{P:main}, $\C(\overline{H}_1^*(x))\neq \emptyset$. By Proposition~\ref{4.3}, $R_0=\rho_F^{1}(\overline{H}_1(x))$.
 If $\lambda_0,\lambda_1\in \Omega^{-1}$ and $\lambda_0\in R_0$, $\lambda_1\notin R_0$, then
\[
ind(\lambda_0-\overline{H}_1(x))=1
\]
and
\[
ind(\lambda_0-\overline{H}_1(x))\neq 1.
\]
This contradicts to the continuity of index.

Now suppose $\Omega^{-1}\cap R_0=\emptyset$.   By Proposition~\ref{P:main},
$\C(\overline{H}_1^*(x))\neq \emptyset$. By Proposition~\ref{4.3}, $\Omega^{-1}\subseteq\rho(\overline{H}_1(x))$. For $\lambda\in \Omega$,
\[
\overline{H}_1(x)(\lambda-\overline{T}_1(x))=\lambda\left(\overline{H}_1(x)-\frac{1}{\lambda}I\right).
\]
 By (1.1), both $\lambda-\overline{T}_1(x)$ and $\overline{H}_1(x)-\frac{1}{\lambda}I$ are invertible. So $\overline{H}_1(x)$ is invertible. By Proposition~\ref{P:main}, $\overline{H}_1(x)\in B_1(\Phi_0)$ and $0\in\Phi_0$. So $\overline{H}_1(x)$ is not invertible.
  This is a contradiction. Thus Claim 2 holds.

By Claim 2 and (3) of Lemma~\ref{L:4 conditions}, we have $ind (\overline{H}_1(x)-\lambda)=1$ for $\lambda\in \Omega^{-1}$. Since $\Omega\subseteq \rho(T^{-1})$, $\Omega^{-1}\subseteq \rho(T)$. By (1.2),
\[
0=ind (T-\lambda)=ind(\overline{H_1}(x)-\lambda)+ind(H_2(x)-\lambda)\geq 1+0=1,\quad\forall \lambda\in \Omega^{-1}.
\]
This is a contradiction.
\end{proof}

\section{Applications}

\subsection{ Hyponormal operators}

\begin{defn} Let $T\in \mathcal{B}(\mathcal{H})$. $T$ is  called hyponormal if $T^*T-TT^*\geq 0$.

\end{defn}

The following proposition is well-known.

\begin{prop} Let $T\in \mathcal{B}(\mathcal{H})$ be an invertible hyponormal operator and $\mathcal{N}\in Lat(T)$.  Then $T|_{\mathcal{N}}$ and $T^{-1}$ are both hypernormal.

\end{prop}

Suppose $A\in B_1(\Omega)$ and $0\in\Omega$. For $n\geq 1$, choose a unit vector $e_{n-1}\in ker A^n\ominus ker A^{n-1}$. Then $\{e_n\}_{n=0}^\infty$ is an ONB of $\H$ and
$$A=\begin{pmatrix}
0&\quad a_{01}&\quad a_{02}&\quad a_{03}&\quad \cdots\\
0&\quad 0&\quad a_{12}&\quad a_{13}&\quad\cdots\\
0&\quad  0&\quad 0&\quad a_{23}&\quad\cdots\\
\vdots&\quad\vdots&\quad\vdots&\quad\vdots&\quad\ddots
\end{pmatrix}\begin{matrix}
e_{0}\\
e_{1}\\
e_{2}\\
\vdots
\end{matrix}.
$$
Operator $B$ is called \emph{the standard right inverse} of $A$ if $AB=I$ and
$$
B=\begin{pmatrix}
0&\quad 0&\quad 0&\quad 0&\quad\cdots\\
b_{21}&\quad b_{22}&\quad b_{23}&\quad b_{24}&\quad\cdots\\
b_{31}&\quad b_{32}&\quad b_{33}&\quad b_{34}&\quad\cdots\\
\vdots&\quad\vdots&\quad\vdots&\quad\vdots&\quad\ddots
\end{pmatrix}\begin{matrix}
e_{0}\\
e_{1}\\
e_{2}\\
\vdots
\end{matrix}.$$

In order to prove the main result in this subsection, we need the following lemmas.

\begin{lem}\label{JWnew} Suppose that  $A\in B_1(\Omega)$, $0\in \Omega$ and $B$ is the standard right inverse of $A$. Write
$$A=\begin{pmatrix}
0&\quad a_{01}&\quad a_{02}&\quad a_{03}&\quad \cdots\\
0&\quad 0&\quad a_{12}&\quad a_{13}&\quad\cdots\\
0&\quad  0&\quad 0&\quad a_{23}&\quad\cdots\\
\vdots&\quad\vdots&\quad\vdots&\quad\vdots&\quad\ddots
\end{pmatrix}\begin{matrix}
e_{0}\\
e_{1}\\
e_{2}\\
\vdots
\end{matrix}.$$
Then $|a_{k, k+1}|\geq \frac{1}{\|B\|}.$
\end{lem}

\begin{proof} Since $B$ can be regarded as an invertible operator from $\mathcal{H}$ onto $\ker A^{\perp}$. So for any $y\in \ker A^{\perp}$,  we have $y=ABy=BAy.$  Then it follows that $\|y\|\leq \|B\|\cdot \|Ay\|$, and
$\|Ay\|\geq \frac{1}{\|B\|}\|y\|.$
Note that
\[
a_{0,1}e_0=Ae_1,
\]
\[
a_{1,2}e_1=A\left(e_2-\frac{a_{0,2}}{a_{0,1}}e_1\right),
\]
\[
a_{2,3}e_2=Ae_3-a_{0,3}e_0-a_{1,3}e_1=Ae_3-\frac{a_{0,3}}{a_{0,1}}Ae_1-\frac{a_{1,3}}{a_{1,2}}A\left(e_2-\frac{a_{0,2}}{a_{0,1}}e_1\right)
\]
\[
=A(e_3+y_3),
\]
where $y_3$ is in span$\{e_1,e_2\}$. By induction, we have
\[
a_{k,k+1}e_k=A(e_{k+1}+y_{k+1}),
\]
where $y_{k+1}$ is in span$\{e_1,\cdots,e_k\}$.
 Thus, we have that
$$\|a_{k,k+1}\|=\|A(e_{k+1}+y_{k+1})\|\geq \frac{1}{\|B\|}\|e_{k+1}+y_{k+1}\|=\frac{1}{\|B\|}(1+\|y_{k+1}\|^2)^{\frac{1}{2}}\geq \frac{1}{\|B\|}.$$

\end{proof}

\begin{lem}\label{7.4} Let $T\in \mathcal{B}(\mathcal{H})$ be invertible and transitive. Suppose that $x$ is a nonzero noncyclic vector of $T^{-1}$ and $\mathcal{N}(x)=\mbox{span}\{T^{-k}x, k\geq 1\}$.  Then
$$T=\begin{pmatrix}
 H_1(x) &\quad H_{12}(x) \\
 f(x)\otimes g(x) &\quad H_{2}(x)\\
\end{pmatrix}\begin{matrix}\mathcal{N}(x)&\\
\mathcal{N}(x)^{\bot}&\end{matrix},\ T^{-1}=\begin{pmatrix}
 \overline{T}_1(x) &\quad \overline{T}_{12}(x) \\
 0 &\quad \overline{T}_{2}(x)\\
 \end{pmatrix}\begin{matrix}\mathcal{N}(x)&\\
\mathcal{N}(x)^{\bot}&\end{matrix}. $$
Furthermore, there exists an ONB $\{u_k\}^{\infty}_{k=-\infty}$ of $\mathcal{H}$, $u_k\in ker \overline{T}^{*(k+1)}_1(x)\ominus ker\overline{T}^{*k}_1(x)$ for $k\geq 0$, $u_{-k}\in ker \overline{T}_2^k(x)\ominus ker\overline{T}_2^{k-1}(x)$ for $k\geq 1$, which satisfies
$$\overline{T}^*_1(x)=\begin{pmatrix}
0&\quad t_{0-1}&\quad t_{0-2}&\quad t_{0-3}&\quad \cdots\\
0&\quad 0&\quad t_{-1-2}&\quad t_{-1-3}&\quad \cdots\\
0&\quad 0&\quad 0&\quad t_{-2-3}&\quad \cdots\\
\vdots&\quad \vdots&\quad \vdots&\quad \vdots&\quad \ddots
\end{pmatrix}\begin{matrix}
u_{0}\\
u_{-1}\\
u_{-2}\\
\vdots
\end{matrix},$$ and $$ \overline{T}_2(x)=\begin{pmatrix}
0&\quad t_{12}&\quad t_{13}&\quad t_{14}&\quad \cdots\\
0&\quad 0&\quad t_{23}&\quad t_{24}&\quad \cdots\\
0&\quad 0&\quad 0&\quad t_{34}&\quad \cdots\\
\vdots&\quad \vdots&\quad \vdots&\quad \vdots&\quad \ddots
\end{pmatrix}\begin{matrix}
u_{1}\\
u_{2}\\
u_{3}\\
\vdots
\end{matrix},$$
where $|t_{-k,-k+1}|\geq \frac{1}{2\|T\|}, |t_{k, k+1}|\geq \frac{1}{2\|T\|}, k=0,1,2\cdots.$

\end{lem}

\begin{proof} Note that $\overline{T}^*_1(x)H^*_1(x)=I_{\mathcal{N}(x)}$. Write
\[
H^*_1(x)=\begin{pmatrix}
s_{01}&\quad s_{02}&\quad s_{03}&\quad s_{04}&\quad \cdots\\
s_{11}&\quad s_{12}&\quad s_{13}&\quad s_{14}&\quad \cdots\\
s_{21}&\quad s_{22}&\quad s_{23}&\quad s_{24}&\quad \cdots\\
\vdots&\quad \vdots&\quad \vdots&\quad \vdots&\quad \ddots
\end{pmatrix}\begin{matrix}
u_{0}\\
u_{-1}\\
u_{-2}\\
\vdots
\end{matrix}
\]
and
\[
B=\begin{pmatrix}
0&\quad 0&\quad 0&\quad 0&\quad \cdots\\
s_{11}&\quad s_{12}&\quad s_{13}&\quad s_{14}&\quad \cdots\\
s_{21}&\quad s_{22}&\quad s_{23}&\quad s_{24}&\quad \cdots\\
\vdots&\quad \vdots&\quad \vdots&\quad \vdots&\quad \ddots
\end{pmatrix}\begin{matrix}
u_{0}\\
u_{-1}\\
u_{-2}\\
\vdots
\end{matrix}.
\]
Then $\overline{T}^*_1(x)B=I_{\mathcal{N}(x)}$ and $B$ is the standard right inverse of $\overline{T}^*_1(x)$.
 By Lemma \ref{JWnew}, we have that  $$|t_{-k,-k+1}|\geq \frac{1}{\|B\|}\geq \frac{1}{2\|H^*_1(x)\|}\geq \frac{1}{2\|T\|}.$$
Notice that $T_2(x)H_2(x)=I_{\mathcal{N}(x)^\perp}$, we also have that $$|t_{k, k+1}|\geq \frac{1}{2\|H_2(x)\|}\geq \frac{1}{2\|T\|}, \quad k=0,1,2\cdots.$$

\end{proof}

\begin{lem}\label{keyL2} Suppose $T\in \mathcal{B}(\mathcal{H})$ is invertible and transitive, and there exists an ONB $\{u_k\}^{\infty}_{k=-\infty}$ of $\mathcal{H}$ given as in Lemma~\ref{7.4} such that
$$
T^{-1}=\begin{pmatrix}
\bar{T}_1(x)&\quad \bar{T}_{12}(x)\\
0&\quad \bar{T}_2(x)
\end{pmatrix}\begin{matrix}
\mathcal{N}(x)\\
\mathcal{N}(x)^\perp
\end{matrix}$$ $$=\begin{pmatrix}
\ddots\\
&0&\quad t_{-2-3}&\quad t_{-1-3}&\quad t_{0-3}&\quad \ast\\
&&\quad 0&\quad t_{-1-2}&\quad t_{0-2}&\quad \ast&&\quad \ast\\
&&&0&\quad t_{0-1}&\quad \ast\\
&&&&0&\quad a_{01}&\quad \ast&\quad \ast&\quad \ast&\quad \cdots\\
&&&&&0&\quad t_{12}&\quad t_{13}&\quad t_{14}&\quad \cdots\\
&&0&&&0&\quad 0&\quad t_{23}&\quad t_{24}&\quad \cdots\\
&&&&&0&\quad 0&\quad 0&\quad t_{34}&\quad \cdots\\
&&&&&\vdots&\quad \vdots&\quad \vdots&\quad \ddots&
\end{pmatrix}
\begin{matrix}
\vdots\\
u_{-3}\\
u_{-2}\\
u_{-1}\\
u_0\\
u_1\\
u_2\\
u_3\\
\vdots\\
\end{matrix}.
$$
Then $a_{01}\neq 0.$
\end{lem}

\begin{proof} Notice that $$\overline{T}^*_1(x)=\begin{pmatrix}
0&\quad \bar{t}_{0-1}&\quad \bar{t}_{0-2}&\quad \bar{t}_{0-3}&\quad \cdots\\
0&\quad 0&\quad \bar{t}_{-1-2}&\quad \bar{t}_{-1-3}&\quad \cdots\\
0&\quad 0&\quad 0&\quad \bar{t}_{-2-3}&\quad \cdots\\
\vdots&\quad \vdots&\quad \vdots&\quad \vdots&\quad \ddots
\end{pmatrix}\begin{matrix}
u_{0}\\
u_{-1}\\
u_{-2}\\
\vdots
\end{matrix}$$ and set $$A=\begin{pmatrix}
0&\quad \bar{t}_{-1-2}&\quad \bar{t}_{-1-3}&\quad \bar{t}_{-1-4}&\quad \cdots\\
0&\quad 0&\quad \bar{t}_{-2-3}&\quad \bar{t}_{-2-4}&\quad \cdots\\
0&\quad 0&\quad 0&\quad \bar{t}_{-3-4}&\quad \cdots\\
\vdots&\quad \vdots&\quad \vdots&\quad \vdots&\quad \ddots
\end{pmatrix}\begin{matrix}
u_{-1}\\
u_{-2}\\
u_{-3}\\
\vdots
\end{matrix}.$$
By  Lemma \ref{3.3}, there exists an invertible operator $G$ such that $$G\overline{T}^*_1(x)G^{-1}=A,\quad  G^{-1*}\overline{T}_1(x)G^*=A^*.$$  Since
$\mathcal{C}(\overline{T}_1(x))\neq \emptyset$,  we have $\mathcal{C}(A^*)\neq \emptyset$. Thus, there exists an $x_{-1}$ such that
$$\mathcal{N}(x_{-1})=\mbox{span}\{u_{-1},u_{-2},u_{-3}, \cdots\} \,\, \mbox{and}\,\,\overline{T}_1(x_{-1})=A^*.$$ Since $T$ is transitive, by Proposition~\ref{P:main}, we have that $\overline{T}_2(x_{-1})\in B_1(\Omega)$. If $\alpha_{01}=0$, we have $\mbox{dim}\ker \overline{T}_2(x_{-1})=2,$ which is a contradiction.

\end{proof}

\begin{lem}\label{7.7}Let $T\in \mathcal{B}(\mathcal{H})$ be invertible and transitive. Then there exists an $x_n\in \mathcal{H}$ such that $\mathcal{N}(x_n)=\mbox{span}\{u_k, k\leq n\},$ and satisfies
\begin{enumerate}
  \item [(1)]$\overline{T}_1(x_n)\sim_s \overline{T}_1(x_{n-1}) $;
  \item [(2)] $P_{\mathcal{N}(x_n)}\stackrel{SOT}{\longrightarrow} I$;
  \item[(3)] $P_{\mathcal{N}(x_n)}T^{-1}|_{\mathcal{N}(x_n)}\stackrel{SOT}{\longrightarrow} T^{-1}$.
    \end{enumerate}

\end{lem}

\begin{proof} Firstly, we will find $x_1$ such that $\mathcal{N}(x_1)=\mbox{span}\{u_k, k\leq 1\}.$ Set $$A_1=\begin{pmatrix}
0&\quad \bar{a}_{01}&\quad *&\quad *&\quad \cdots\\
0&\quad 0&\quad \bar{t}_{0-1}&\quad \bar{t}_{0-2}&\quad \cdots\\
0&\quad 0&\quad 0&\quad \bar{t}_{-1-2}&\quad \cdots\\
\vdots&\quad \vdots&\quad \vdots&\quad \vdots&\quad \ddots
\end{pmatrix}\begin{matrix}
u_{1}\\
u_{0}\\
u_{-1}\\
\vdots
\end{matrix}.$$
By Lemma \ref{keyL2}, we have  $a_{01}\neq 0$. By Lemma \ref{3.3}, $A_1\sim_s \overline{T}^*_1(x_0)$, where $x_0$ is the vector $x$ in Lemma \ref{keyL2}.  Since $T^{-1}x_0\in \mathcal{C}(\overline{T}_1(x_0))$, there exists some $x_1\neq 0$ such that $$\mathcal{N}(x_1)=\mbox{span}\{T^{-k}x_1, k\geq 1\}=\mbox{span}\{u_k, k\geq 1\}.$$
Meanwhile, $$T^{-1}|_{\mathcal{N}(x_1)}:=\overline{T}_1(x_1)=A^*_1,$$
and $T^{-1}x_1\in \mathcal{C}(\overline{T}_1(x_1))$. Now set
$$A_2=\begin{pmatrix}
0&\quad \bar{t}_{12}&\quad \bar{t}_{13}&\quad \bar{t}_{14}&\quad \cdots\\
0&\quad 0&\quad \bar{\alpha}_{01}&\quad *&\quad \cdots\\
0&\quad 0&\quad 0&\quad \bar{t}_{0-1}&\quad \cdots\\
\vdots&\quad \vdots&\quad \vdots&\quad \vdots&\quad \ddots
\end{pmatrix}\begin{matrix}
u_{2}\\
u_{1}\\
u_{0}\\
\vdots
\end{matrix}.$$
By Lemma \ref{7.4}, we have that $|t_{1,2}|\geq \frac{1}{2\|T\|}$. By Lemma \ref{3.3}, we have that $A_2\sim_s \overline{T}^*_{1}(x_1)$. Since $T^{-1}x_1\in \mathcal{C}(\overline{T}_1(x_1))$, there exists an $x_2$ such that
 $$\mathcal{N}(x_2)=\mbox{span}\{T^{-k}x_2, k\geq 1\}=\mbox{span}\{u_k, k\geq 2\}.$$
 In this case, set $\overline{T}_1(x_2)=A^*_2.$ Repeating the steps above,  we can find a sequence $\{x_n\}^{\infty}_{n=1}.$ Notice that $\mathcal{N}(x_{n-1})\subseteq \mathcal{N}(x_{n})$. Then $\{x_n\}^{\infty}_{n=1}$ satisfies the condition of this lemma.
\end{proof}

\begin{thm}\label{T:hypernormal}
Let $T\in \mathcal{B}(\mathcal{H})$ be an invertible hyponormal operator. If $T^{-1}$ is intransitive and there exist at least two bounded components of $\mbox{int} \sigma(T^{-1})^{\land}$, then $T$ is also intransitive.

\end{thm}

\begin{proof}
If $\mbox{int}\sigma(T^{-1})\neq \emptyset,$  by the result of S. Brown \cite{Brown}, we  see that
$T$ and $T^{-1}$ both have nontrivial invariants subspaces. So in the following we assume $\mbox{int}\sigma(T^{-1})=\emptyset.$

Let $x$ be a nonzero noncyclic vector of $T^{-1}$ and let $\N(x)=span\{T^{-n}x:\,n\geq 1\}$. Denote by $\overline{T}_1(x)=P_{\N(x)}T^{-1}P_{\N(x)}$.  Suppose $T$ is transitive. By Proposition~\ref{P:main}, $\overline{T}_1^*(x)\in B_1(\Omega_0^*)$  and $0\in \Omega_0$. By Lemma~\ref{7.2}, let $\overline{T}_1^*(x)\in B_1(L_0^*)$ such that $L_0^*\supseteq \Omega_0^*$ is maximal. By Proposition~\ref{P:main}, $\C(\overline{T}_1(x))\neq \emptyset$. By Proposition~\ref{4.3}, $L_0=\rho_F^{-1}(\overline{T}_1(x))$.

Let $\U_0$ and $\U_1$ be two bounded components of $\mbox{int} \sigma(T^{-1})^{\land}$ such that $0\in\U_0$. By the assumption $\mbox{int}\sigma(T^{-1})=\emptyset,$ we have $\U_0, \U_1\subseteq \rho(T^{-1})$. Since $\N(x)\in Lat (T^{-1})$, $\sigma(\overline{T}_1(x))\subseteq \sigma(T^{-1})^\land$ and $\sigma(\overline{T}_1(x))^\land\subseteq \sigma(T^{-1})^\land$. Therefore, $L_0\subseteq \U_0$ and $L_0\cap \U_1=\emptyset$.
 Suppose $\partial \U_1\subseteq \sigma_e(\overline{T}_1(x))$. Since $\U_1\subseteq \rho(T^{-1})$, $\U_1\subseteq\rho_{s-F}(\overline{T}_1(x))$. Note that $L_0\cap\U_1=\emptyset$ and $\C(\overline{T}_1(x))\neq \emptyset$. By Proposition~\ref{4.3}, $\U_1\subseteq \rho(\overline{T}_1(x))$ and $\sigma(\overline{T}_1(x))^\land\cap \rho_F(\overline{T}_1(x))$ has a connected component $\U_1$ which does not contain zero point. By Theorem~\ref{T:main theorem 1}, $T$ is intransitive. This contradicts to the assumption that $T$ is transitive. Thus there exists a $\lambda\in \partial \U_1$ such that $\lambda\in \rho_{F}(\overline{T}_1(x))$. Since $\C(\overline{T}_1(x))\neq \emptyset$, by Lemma~\ref{3.4} and Proposition~\ref{4.4}, $\lambda\in \rho_F^{-1}(\overline{T}_1(x))$ or $\lambda\in \rho(\overline{T}_1(x))$. Note that $L_0=\rho_F^{-1}(\overline{T}_1(x))$ and $L_0\cap  \U_1=\emptyset$.  We have $\lambda\notin \rho_F^{-1}(\overline{T}_1(x))$. Thus $\lambda\in \rho(\overline{T}_1(x))$.

By Lemma \ref{7.7}, there exists a sequence of vectors $\{x_n\}^{\infty}_{n=0}\in \mathcal{H}$ such that $\mathcal{N}(x_{n-1})\subseteq \mathcal{N}(x_n)$. Then $P_{\mathcal{N}(x_n)}\stackrel{SOT}{\longrightarrow} I$, $\overline{T}_1(x_n)\stackrel{SOT}{\longrightarrow} T^{-1}$ and $\overline{T}_1(x_n)\sim_s \overline{T}_1(x_{n-1})$ and thus $\sigma( \overline{T}_1(x_n))=\sigma(\overline{T}_1(x_0)), n\geq 0$. Furthermore, for any $\xi \in \mathcal{H}$, by [Proposition 2.1, page 72] of~\cite{MP}, we have
$$\begin{array}{lll}
\|(\overline{T}_1(x_n)-\lambda)P_{\mathcal{N}(x_n)}\xi\|&\geq & \frac{1}{\|(\overline{T}_1(x_n)-\lambda)^{-1}\|}\|P_{\mathcal{N}(x_n)}\xi\| \\
&=&dist\{\lambda, \sigma(\overline{T}_1(x_n))\}\|P_{\mathcal{N}(x_n)}\xi\| \\
&=&dist\{\lambda, \sigma(\overline{T}_1(x_0))\}\|P_{\mathcal{N}(x_n)}\xi\|
\end{array}$$
Notice that $$\|(T^{-1}-\lambda)\xi\|=\lim\limits_{n\rightarrow \infty}\|P_{\mathcal{N}(x_n)}(\overline{T}_1(x_n)-\lambda)P_{\mathcal{N}(x_n)}\xi\|=\lim\limits_{n\rightarrow \infty}\|(\overline{T}_1(x_n)-\lambda)P_{\mathcal{N}(x_n)}\xi\|.$$
It follows that $$\|(T^{-1}-\lambda)\xi\|\geq dist\{\lambda, \sigma(\overline{T}_1(x_0))\}\|\xi\|.$$
Therefore, $(T^{-1}-\lambda)$ is bounded below. On the other hand, $\lambda\in \partial \U_1\subseteq \partial\sigma(T^{-1})$. This is a contradiction.
\end{proof}

\subsection{Strictly cyclic invariant subspaces}

\begin{defn}
Let $T\in \B(\H)$ and let $\A(T)$ be the closed subalgebra of $\B(\H)$ generated by $T$ and $I$. A vector $\xi\in\H$ is called a strictly cyclic vector of $T$ if $\{S\xi:\, S\in\A(T)\}=\H$. An invariant subspace $\K\subseteq \H$ of $T$ is called a strictly cyclic invariant subspace if $T|_\K\in \B(\K)$ has a strictly cyclic vector.
\end{defn}

The following lemma is due to B.Barnes~(Corollary 3 of \cite{Bar}).
\begin{lem}\label{L:7.11}
Suppose $T\in \B(\H)$ is strictly cyclic. Then $\sigma_p(T^*)=\sigma(T^*)$.
\end{lem}

\begin{lem}\label{7.13} Let $T\in \mathcal{B}({\mathcal{H}})$ be an invertible operator. If $T^{-1}$ is intransitive and satisfies the following properties:
\begin{enumerate}
\item  there exists a bounded open set $\Omega$ which is a connected component of $\rho(T^{-1})$ such that $\Omega\cap \U_0=\emptyset$, where $\U_0$ is the connected component of $int(\sigma(T^{-1})^\land)$ containing zero point;
\item  $\sigma_p((P_{\mathcal{N}(x)}T^{-1}|_{\mathcal{N}(x)})^*)=\sigma((P_{\mathcal{N}(x)}T^{-1}|_{\mathcal{N}(x)})^*),$
\end{enumerate}
then $T$ is also intransitive.

\end{lem}

\begin{proof}
Suppose $T$ is transitive.

{\bf Claim:}\, $\sigma(T^{-1})\subseteq\sigma(\overline{T}_2(x))$. Otherwise, there exists a $\lambda\in \sigma(T^{-1})$ such that $\lambda\in \rho(\overline{T}_2(x))$. If $\lambda\in \rho(\overline{T}_1(x))$, then direct computation shows that $\lambda\in \rho(T^{-1})$. It is a contradiction. Suppose $\lambda\in \sigma(\overline{T}_1(x))$. Then $\bar{\lambda}\in \sigma((\overline{T}_1(x))^*)=\sigma_p((\overline{T}_1(x))^*)$. Thus there exists a nonzero vector $\xi\in \N(x)$ such that $(\overline{T}_1(x)-\lambda)^*\xi=0$. Note that $(\overline{T}_2(x)-\lambda)^*$ is invertible. There exists a vector $\eta\in \N(x)^\perp$ such that
\[
(\overline{T}_2(x)-\lambda)^*\eta=-\overline{T}_{12}(x)^*\xi.
\]
This implies that
\[
\left(T^{-1}-\lambda\right)^*\begin{pmatrix}
\xi\\
\eta
\end{pmatrix}=\begin{pmatrix}
(\overline{T}_1(x)-\lambda)^*&\quad 0\\
\overline{T}_{12}(x)^*&\quad (\overline{T}_2(x)-\lambda)^*
\end{pmatrix}\begin{pmatrix}
\xi\\
\eta
\end{pmatrix}=0.
\]
Thus $\sigma_p(T^{-1*})\neq \emptyset$ and $T$ is intransitive. It is a contradiction and the Claim is true. By Theorem~\ref{T:main theorem 2}, $T$ is intransitive.
\end{proof}

\begin{thm}\label{T:cyclic}
Let $T\in \B(\H)$ be invertible.  If $T^{-1}$ has a proper strictly cyclic invariant subspace and there exists a bounded open set $\Omega$ which is a connected component of $\rho(T^{-1})$ such that $\Omega\cap \U_0=\emptyset$, where $\U_0$ is the connected component of $int(\sigma(T^{-1})^\land)$ containing zero point, then $T$ is intransitive.
\end{thm}
\begin{proof}
Suppose $\N(x)$ is a strictly cyclic invariant subspace of $T^{-1}$. By Lemma~\ref{L:7.11}, $\sigma_p(\bar{T}_1^*(x))=\sigma(\bar{T}_1^*(x))$. By Lemma~\ref{7.13}, $T$ has a nontrivial invariant subspace.
\end{proof}

\end{document}